\documentclass[11pt,reqno]{amsart}
\usepackage{amsmath,amsfonts,amssymb}

\allowdisplaybreaks
\numberwithin{equation}{section}

\newtheorem{theorem}{Theorem}
\newtheorem{lemma}{Lemma}
\theoremstyle{remark}
\newtheorem{remark}{Remark}

\newcommand{\del}{\partial}
\newcommand{\vep}{\varepsilon}
\newcommand{\bk}{\backslash}
\newcommand{\HH}{\mathbb{H}}
\newcommand{\D}{\mathbb{D}}

\newcommand{\Q}{\mathbb{Q}}
\newcommand{\C}{\mathbb{C}}
\newcommand{\Z}{\mathbb{Z}}
\usepackage{hyperref}
\hypersetup{colorlinks,citecolor=blue}

\begin{document}
\title[Local index theorem for orbifold Riemann surfaces]{Local index theorem for orbifold Riemann surfaces}
\author{Leon A. Takhtajan}
\address{Department of Mathematics,
Stony Brook University, Stony Brook, NY 11794 USA; Euler International Mathematical Institute, Pesochnaya Nab. 10, Saint Petersburg 197022 Russia}
\author{Peter Zograf}
\address{Steklov Mathematical Institute, Fontanka 27, Saint Petersburg 191023 Russia; Chebyshev Laboratory, Saint Petersburg State University, 14-th Line V.O. 29, Saint Petersburg 199178 Russia}
\thanks{Supported by RScF grant 16-11-10039}
\keywords{Fuchsian groups, determinant line bundles, Quillen's metric, local index theorems}
\subjclass{14H10, 58J20, 58J52}
\date{\today}
\begin{abstract}
We derive a local index theorem in Quillen's form for families of Cauchy-Riemann operators on orbifold Riemann surfaces (or Riemann orbisurfaces) 
that are quotients of the hyperbolic plane by the action of cofinite finitely generated Fuchsian groups. 
Each conical point (or a conjugacy class of primitive elliptic elements in the Fuchsian group) gives rise to an extra term in the local index theorem 
that is proportional to the symplectic form of a new K\"{a}hler metric on the moduli space of Riemann orbisurfaces.
We find a simple formula for a local K\"{a}hler potential of the elliptic metric and show that when the order of elliptic element becomes large, the elliptic metric converges to the cuspidal one corresponding to a puncture on the orbisurface (or a conjugacy class of primitive parabolic elements). We also give a simple example of a relation between the elliptic metric and special values of Selberg's zeta function.
\end{abstract}

\maketitle

\section{Introduction}
Quillen's local index theorem for families of Cauchy-Riemann operators \cite{Q} explicitly computes the first Chern form of the corresponding determinant line bundles equipped with Quillen's metric. The advantage of local formulas becomes apparent when the families parameter spaces are non-compact. In the language of algebraic geometry, Quillen's local index theorem is a manifestation of the ``strong'' Grothendieck-Riemann-Roch theorem that claims an isomorphism between metrized
holomorphic line bundles. 

The literature on Quillen's local index theorem is abundant, but mostly deals with families of smooth compact varieties. In this paper we derive a general local index theorem for families of Cauchy-Riemann operators on Riemann orbisurfaces, both compact and with punctures, that appear as quotients $X=\Gamma\backslash\HH$ of the hyperbolic plane 
$\HH$ by the action of finitely generated cofinite Fuchsian groups $\Gamma$. The main result (cf. Theorem \ref{theorem 2}) is the following formula on the moduli space associated with the group $\Gamma$:
\pagebreak
\begin{align*}
&c_1(\lambda_{k}, ||\cdot||_{k}^Q)=\dfrac{6k^2-6k+1}{12\pi^2}\omega_{\mathrm{WP}}-\dfrac{1}{9}\omega_{\rm cusp}\\
&+\dfrac{1}{4\pi}\sum_{j=1}^l \left(2(k-1)B_{1}\left(\left\{\frac{k-1}{m_{j}}\right\}\right)-m_{j} B_2\left(\left\{\dfrac{k-1}{m_j}\right\}\right)+\dfrac{6k-5}{6m_j}\right)\omega_j^{\rm ell}.
\end{align*}
Here $c_1(\lambda_{k}, ||\cdot||_{k}^Q)$ is the first Chern form of the determinant line bundle $\lambda_{k}$ of the vector bundle of square integrable meromorphic $k$-differentials, $k\geq 1$, on 
$X=\Gamma\backslash\HH$ equipped with the Quillen's metric, $\omega_{\mathrm{WP}}$ is a symplectic  form of the Weil-Petersson metric on the moduli space, $\omega_{\rm{cusp}}$ is a symplectic form of the cuspidal metric (also known as Takhtajan-Zograf metric), $\omega_j^{\rm ell}$ is the symplectic form of a K\"{a}hler metric associated with elliptic fixpoints, $B_{1}(x)=x-\frac{1}{2}$ and $B_2(x)=x^2-x+\frac{1}{6}$ are Bernoulli polynomials, and $\{x\}$ is the fractional part of $x\in\Q$. We refer the reader to Sections \ref{2.1}--\ref{2.3} and \ref{3.2} for the definitions and precise statements. Note that the above formula is equivalent to formula \eqref{main} for $k\leq 0$ because the Hermitian line bundles $\lambda_k$ and $\lambda_{1-k}$ on the moduli space are isometrically isomorphic, see Remark \ref{isom}. 

Note that the case of smooth punctured Riemann surfaces was treated by us much earlier in
\cite{TZ}, and now we add conical points into consideration. The motivation to study families of Riemann orbisurfaces comes from various areas of mathematics and theoretical physics -- from Arakelov geometry \cite{FP} to the theory of quantum Hall effect \cite{C}. In particular, the paper \cite{FP} 
that establishes the Riemann-Roch type isometry for non-compact orbisurfaces as Deligne isomorphism of metrized $\mathbb{Q}$-line bundles stimulated us to  extend the results of \cite{TZ} to the orbisurface setting. 

The paper is organized as follows. Section \ref{2} contains the necessary background material. In Section \ref{3} we prove the local index theorem for families of $\bar\del$-operators on Riemann orbisurfaces that are factors of the hyperbolic plane by the action of finitely generated cofinite Fuchsian groups. Specifically, 
we show that the contribution to the local index formula from elliptic elements of Fuchsian groups is given by the symplectic form of a K\"{a}hler metric on the moduli space of orbisurfaces.
 Since the cases of smooth (both compact and punctured) Riemann surfaces have been well understood by us quite a while ago  \cite{ZT,TZ}, in Section \ref{3.2} we mainly emphasize the computation of the contribution from conical points corresponding to elliptic elements. In Section \ref{potential} we find a simple formula for a local K\"{a}hler potential of the elliptic metric, and in Section \ref{relation} we show that in the limit when the order of the elliptic element 
tends to $\infty$ the elliptic metric coincides with the corresponding cusp metric. Finally, in Section \ref{zeta values} we give a simple example of a relation between the elliptic metric and special values of Selberg zeta 
function for Fuchsian groups of signature (0;1;2,2,2).

\subsection*{Acknowledgments.} We thank G.~Freixas i Montplet for showing us a preliminary version of \cite{FP} and for stimulating discussions.
We are especially grateful to Lee-Peng Teo, who carefully read the manuscript and pointed out a number of misprints to us. Most importantly, she noticed that the term containing the first Bernoulli polynomial $B_{1}(x)$ was missing from the main formula in the original version of the paper, and recently proposed an independent approach to its derivation, see \cite{LPT}. 
%the missing term in the main formula, which involves the first Bernoulli polynomial $B_{1}(x)$. 

\section{Preliminaries} \label{2}
\subsection{Hyperbolic plane and Fuchsian groups}\label{2.1}
We will use two models of the Lobachevsky (hyperbolic) plane: the upper half-plane $\HH=\{z\in\mathbb{C}\,\big|\,{\rm Im}z>0\}$ 
with the metric $\dfrac{|dz|^2}{({\rm Im}z)^2}$, 
and the Poincar\'{e} unit disk $\D=\{u\in\mathbb{C}\,\big|\,|u|<1\}$ with the metric $\dfrac{4|du|^2}{(1-|u|^2)^2}$. The biholomorphic isometry between the two models is given by the linear fractional transformation $u=\dfrac{z-z_0}{z-\bar{z}_0}$ for any $z_0\in \HH$.

A Fuchsian group $\Gamma$ of the first kind is a finitely generated cofinite discrete subgroup of $\mathrm{PSL}(2,\mathbb{R})$ acting on $\HH$ (it can also be considered as a subgroup of $\mathrm{PSU}(1,1)$ acting on $\D$). 
Such $\Gamma$ has a standard presentation with $2g$ hyperbolic generators $A_1,B_1,\ldots,A_g,B_g$, $n$ parabolic generators $S_1,\ldots,S_n$ and $l$ elliptic generators $T_1,\ldots,T_l$ of orders $m_1,\ldots,m_l$ satisfying the relations
\begin{align*}
&A_1 B_1A_1^{-1}B_1^{-1}\ldots A_g B_g A_g^{-1}B_g^{-1} S_1\ldots S_n T_1\ldots T_l=I,\\ 
&T_i^{m_i}=I,\;\; i=1,\ldots,l,
\end{align*}
where $I$ is the identity element. The set $(g;n;m_1,\ldots,m_l)$, where $2\leq m_{1}\leq\cdots\leq m_{l}$, is called the signature of $\Gamma$, and we will always assume that
$$2g-2+n+\sum_{i=1}^l \left(1-\dfrac{1}{m_i}\right)>0\,.$$

We will be interested in orbifolds $X=\Gamma\backslash\HH$ (or $X=\Gamma\backslash\D$, if we treat $\Gamma$ as acting on $\D$) for Fuchsian groups $\Gamma$ of the first kind. Such an orbifold is a Riemann surface of genus $g$ with $n$ punctures and $l$ conical points of angles $\dfrac{2\pi}{m_1},\ldots,\dfrac{2\pi}{m_l}$. By a $(p,q)$-differential on the orbifold Riemann surface $X=\Gamma\backslash\HH$ we understand a smooth function $\phi$ on $\HH$ that transforms according to the rule $\phi(\gamma z)\gamma'(z)^p\overline{\gamma'(z)}^q=\phi(z)$. The space of harmonic $(p,q)$-differentials, square integrable with respect to the hyperbolic metric on $X=\Gamma\backslash\HH$, we denote by $\Omega^{p,q}(X)$. 
The dimension of the space of square integrable meromorphic (with poles at punctures and conical points) $k$-differentials on $X$, or cusp forms of weight $2k$ for $\Gamma$, is given by Riemann-Roch formula for orbifolds:
\begin{align*}
\dim\Omega^{k,0}(X)=
\begin{cases} (2k-1)(g-1)+(k-1)n+\sum\limits_{i=1}^l \left[k\left(1-\dfrac{1}{m_i}\right)\right],\;\; k>1,\\ g, \quad k=1,\\ 1, \quad k=0,\\ 0, \quad k<0,
\end{cases}
\end{align*}
where $[r]$ denotes the integer part of $r\in\Q$ (see  \cite[Theorem 2.24]{Sh}). In particular, 
$$\dim\Omega^{2,0}(X)=3g-3+n+l.$$

The elements of the space $\Omega^{-1,1}(X)$ are called harmonic Beltrami differentials and play an important role in the deformation theory of Fuchsian groups, see Sect. \ref{2.3}. To study the behavior of harmonic Beltrami differentials at the elliptic fixpoints we use the unit disk model. Take $\mu\in\Omega^{-1,1}(X)$ and let $T\in\Gamma$ be an elliptic element of order $m$ with fixpoint $z_0\in H$. The pushforward of $T$ to $\D$ by means of the map $u=\dfrac{z-z_0}{z-\bar{z}_0}$ is just the multiplication by 
$\omega=e^{2\pi\sqrt{-1}/m}$, the $m$-th primitive root of unity. The pushforward of $\mu$ to $\D$ (that, slightly abusing notation, we will denote by the same symbol) develops into a power series of the form
$$
\mu(u)=\dfrac{(1-|u|^{2})^{2}}{4}\sum_{n=2}^{\infty}\bar{a}_{n}\bar{u}^{n-2}.
$$
Moreover, since $\mu(\omega u)=\mu(u)\omega^{-2}$
we have $a_{n}=0$ unless $n\equiv 0\!\!\mod m$, so that
\begin{align}
\mu(u)=\dfrac{(1-|u|^{2})^{2}}{4}\sum_{j=1}^{\infty}\bar{a}_{jm}\bar{u}^{jm-2}\,.\label{mu-disk}
\end{align}
In particular, $\mu(0)=0$ for $m>2$ and $\dfrac{\del\mu}{\del u}(0)=0$ for $m=2$.

As in \cite{TZ}, for $\mu,\nu\in\Omega^{-1,1}(X)$ we put $f_{\mu\bar\nu}=(\Delta_{0}+\tfrac{1}{2})^{-1}(\mu\bar\nu)$, where
$$
\Delta_{0}=-y^2\dfrac{\del^{2}}{\del z \del\bar{z}},\quad y={\rm Im}\,z\;,
$$
is the Laplace operator (or rather $1/4$ of the Laplacian) in the hyperbolic metric acting on $\Omega^{0,0}(X)$.
The function $f_{\mu\bar\nu}(u)$ is regular on $\D$ and satisfies
$$f_{\mu\bar\nu}(\omega u)=f_{\mu\bar\nu}(u).$$

The following result is analogues to Lemma 2 in \cite{TZ} and describes the behavior of $f_{\mu\bar\nu}(u)$ at $u=0$. We will use polar coordinates on $\D$ such that $u=re^{\sqrt{-1}\theta}$.

\begin{lemma}\label{f-0} Let  
\begin{align}
f_{\mu\bar\nu}(u)=\sum_{j=-\infty}^{\infty}f_{jm}(r)e^{\sqrt{-1}jm\theta}\label{fou}
\end{align}
be the Fourier series of the function $f_{\mu\bar\nu}(u)$ on $\D$. Then
\begin{itemize}
\item[(i)] 
$f_{0}(r)=c_{0}+c_{2}r^{2}+O(r^{4})$ as $r\rightarrow 0$, where
\begin{align}
c_{2}=\begin{cases} 2c_{0}, & m>2,\\
2c_{0}-4\mu(0)\bar\nu(0), & m=2.
\end{cases}\label{f_0}
\end{align}
\item[(ii)] 
$f_{n}(r)=O(r^{|n|})$ as $r\rightarrow 0$;
\item[(iii)] For the constant term $c_0=f_0(0)$ we have
$$
c_{0}=\int\limits_{X}G(0,u)\mu(u)\overline{\nu(u)}d\rho(u)\,,
$$
where $G(u,v)$ is the integral kernel of $(\Delta_{0}+\tfrac{1}{2})^{-1}$ on $X=\Gamma\backslash\D$, and $d\rho(u)=\dfrac{2\sqrt{-1}}{(1-|u|^2)^2}\,du\wedge d\bar{u}$.
\end{itemize}
\end{lemma}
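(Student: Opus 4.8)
The plan is to follow the strategy of \cite[Lemma 2]{TZ}: work in the unit disk model with $u$ centered at the elliptic fixpoint $z_0$, where $T$ acts by $u\mapsto\omega u$, and use the already-noted invariance $f_{\mu\bar\nu}(\omega u)=f_{\mu\bar\nu}(u)$ together with the regularity of $f_{\mu\bar\nu}$ on $\D$ to reduce the defining equation $(\Delta_{0}+\tfrac12)f_{\mu\bar\nu}=\mu\bar\nu$ to a family of ordinary differential equations for the Fourier coefficients. In polar coordinates on $\D$ one has $\Delta_{0}=-\dfrac{(1-r^{2})^{2}}{16}\bigl(\del_{r}^{2}+\tfrac1r\del_{r}+\tfrac1{r^{2}}\del_{\theta}^{2}\bigr)$, so the coefficient $f_{n}(r)$ with $n=jm$ in \eqref{fou} satisfies
\begin{equation*}
-\frac{(1-r^{2})^{2}}{16}\Bigl(f_{n}''+\frac1r f_{n}'-\frac{n^{2}}{r^{2}}f_{n}\Bigr)+\frac12 f_{n}=g_{n}(r),
\end{equation*}
where $g_{n}(r)$ is the $n$-th Fourier coefficient of $\mu\bar\nu$; invariance under $\omega$ makes $g_{n}\equiv0$ unless $m\mid n$, and regularity of $f_{\mu\bar\nu}$ at $u=0$ forces $f_{0}(r)$ to be an even function of $r$, so $f_{0}(r)=c_{0}+c_{2}r^{2}+O(r^{4})$.

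For (ii) I would run a Frobenius analysis at $r=0$: the homogeneous equation has indicial roots $\pm|n|$, and the singular branch (a $\log r$ when $n=0$) is excluded by smoothness, so the homogeneous part is $O(r^{|n|})$; from \eqref{mu-disk} the part of $\mu\bar\nu$ carrying the phase $e^{\sqrt{-1}\ell m\theta}$ has radial order at least $(|\ell|+2)m-4$, which is $\geq|\ell|m$ since $m\geq2$, whence a particular solution is $O(r^{|n|+2})$ and $f_{n}(r)=O(r^{|n|})$. For (i) with $n=0$, substitute $f_{0}(r)=c_{0}+c_{2}r^{2}+O(r^{4})$ into the $n=0$ equation and let $r\to0$: using that $f_{0}''+\tfrac1r f_{0}'$ equals $4c_{2}$ at $r=0$ gives $-\tfrac14 c_{2}+\tfrac12 c_{0}=g_{0}(0)$. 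It remains to identify $g_{0}(0)$, the constant term of the angular average of $\mu\bar\nu$ at the fixpoint, with $\mu(0)\overline{\nu(0)}$: this vanishes when $m>2$ because then $\mu(0)=\nu(0)=0$ by \eqref{mu-disk}, while for $m=2$ it is read off from the leading coefficients in \eqref{mu-disk}. Hence $c_{2}=2c_{0}-4\mu(0)\overline{\nu(0)}$, which is \eqref{f_0}. For (iii), $f_{\mu\bar\nu}=(\Delta_{0}+\tfrac12)^{-1}(\mu\bar\nu)$ is represented by integration against the resolvent kernel $G$ on $X=\Gamma\backslash\D$, and since $c_{0}=f_{0}(0)=f_{\mu\bar\nu}(0)$, the stated formula for $c_{0}$ is precisely that representation evaluated at $u=0$.

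The ODE manipulations and the power-series bookkeeping from \eqref{mu-disk} are routine. The substantive point is the identification of the constant mode of the source with $\mu(0)\overline{\nu(0)}$, together with the resulting dichotomy $m>2$ versus $m=2$: this is exactly the mechanism by which conical points of angle $\pi$ produce an extra contribution, the orbifold analogue of the cusp computation in \cite{TZ}. One must be careful that only the regular Frobenius branches enter — which rests on the regularity of $f_{\mu\bar\nu}$ across $u=0$ recorded in the discussion preceding the lemma — and that $\mu\bar\nu\in L^{2}(X)$ on the noncompact $X$ (true because harmonic Beltrami differentials are bounded and $X$ has finite hyperbolic volume), so that the resolvent $(\Delta_{0}+\tfrac12)^{-1}$ is applicable in (iii).
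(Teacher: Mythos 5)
Your proposal is correct and follows essentially the same route as the paper: separate variables in polar coordinates, solve the resulting radial ODEs for the Fourier modes using the expansions \eqref{mu-disk}--\eqref{nu-disk} of the source, match constant terms at $r=0$ to get \eqref{f_0} (with the $m=2$ correction coming from $g_0(0)=\mu(0)\overline{\nu(0)}$), and identify $c_0=f_{\mu\bar\nu}(0)$ with the resolvent-kernel integral for (iii). The extra Frobenius/regularity remarks you add are consistent with, and only slightly more explicit than, the paper's argument.
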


\begin{proof}
Since $f_{\mu\bar\nu}(u)$ is a regular solution of the equation
$(\Delta_{0}+\tfrac{1}{2})f=\mu\bar\nu$ at $u=0$,
we have in polar coordinates
\begin{align*}
&-\dfrac{(1-r^{2})^{2}}{16}\left(\dfrac{\del^{2}f}{\del r^{2}}+\dfrac{1}{r}\dfrac{\del f}{\del r}+\dfrac{1}{r^{2}}\dfrac{\del^{2}f}{\del\theta^{2}}\right)(r,\theta)+\dfrac{1}{2}f(r,\theta)=\\
&=\dfrac{(1-r^{2})^{4}}{16}\sum_{i=1}^{\infty}\sum_{j=1}^{\infty}\bar{a}_{im}b_{jm}r^{(i+j)m-4}e^{\sqrt{-1}(j-i)m\theta},
\end{align*}
where we used \eqref{mu-disk} for $\mu(u)$ and the analogous expansion 
\begin{align}
\nu(u)=\dfrac{(1-|u|^{2})^{2}}{4}\sum_{j=1}^{\infty}\bar{b}_{jm}\bar{u}^{jm-2}.\label{nu-disk}
\end{align}
for $\nu(u)$. Then for the term $f_0(r)$ of the Fourier series \eqref{fou} we have the differential equation
\begin{align*}
-\dfrac{(1-r^{2})^{2}}{16}\left(\dfrac{d^{2}f_{0}(r)}{dr^{2}}+\dfrac{1}{r}\dfrac{df_{0}(r)}{dr}\right)+\dfrac{1}{2}f_{0}(r)
=\dfrac{(1-r^{2})^{4}}{16}\sum_{j=1}^{\infty}\bar{a}_{jm}b_{jm}r^{2jm-4}.
\end{align*}
From here we get that $f_{0}(r)=c_{0}+c_{2}r^{2}+O(r^{4})$ as $r\rightarrow 0$,
where
\begin{align*}
c_{2}=\begin{cases} 2c_{0}, & m>2,\\
2c_{0}-4\mu(0)\bar\nu(0), & m=2.
\end{cases}
\end{align*}
For the coefficients $f_n(r)$ with $n\neq 0$ we have
\begin{align*}
&-\dfrac{(1-r^{2})^{2}}{16}\left(\dfrac{d^{2}f_{n}}{dr^{2}}+\dfrac{1}{r}\dfrac{df_{n}}{dr}-\dfrac{n^{2}f_{n}(r)}{r^{2}}\right)+\dfrac{1}{2}f_{n}(r)=\\
&=\dfrac{(1-r^{2})^{4}}{16}\sum_{j=1}^{\infty}\bar{a}_{jm}b_{jm+n}r^{2jm+n-4}\,,
\end{align*}
so that $f_{n}(r)=O(r^{|n|})$ as $r\rightarrow 0$. This proves parts (i) and (ii) of the lemma, from where it follows that $c_{0}=f_{\mu\bar\nu}(0)$. To prove part (iii) it is sufficient to observe that 
$$f_{\mu\nu}(0)=\int\limits_{X}G(0,u)\mu(u)\overline{\nu(u)}d\rho(u)\,.$$
\end{proof}

\subsection{Laplacians on Riemann orbisurfaces}
Let us now switch to the properties of the Laplace operators on the hyperbolic orbifold $X=\Gamma\backslash\HH$, where $\Gamma$ is a Fuchsian group of the first kind. 
Here we give only a brief sketch, and the details can be found in \cite{ZT}, \cite{TZ}.
Denote by $\mathcal{H}^{p,q}$ the Hilbert space of $(p,q)$-differentials on $X$, and let $\bar\del_k:\mathcal{H}^{k,0}\rightarrow\mathcal{H}^{k,1}$
be the Cauchy-Riemann operator acting on $(k,0)$-differentials (in terms of the coordinate $z$ on $\HH$ we have $\bar\del_k=\del/\del\bar{z}$). Denote by $\bar\del_k^*:\mathcal{H}^{k,1}\rightarrow\mathcal{H}^{k,0}$ the formal adjoint to
$\bar\del_k$ and define the Laplace operator acting on $(k,0)$-differentials on $X$ by the formula $\Delta_k=\bar\del_k^*\bar\del_k$.

We denote by $Q_k(z,z';s)$ the integral kernel of $\left(\Delta_k+\tfrac{(s-1)(s-2k)}{4}I\right)^{-1}$ on the entire upper half-plane $\HH$ (where $I$ is the identity operator in the Hilbert space of $k$-differentials on $\HH$).
The kernel $Q_k(z,z';s)$ is smooth for $z\neq z'$ and has an important property that 
$Q_k(z,z';s)=Q_k(\sigma z,\sigma z';s)\sigma'(z)^k\overline{\sigma'(z')^k}$ for any $\sigma\in PSL(2,\mathbb{R})$. For $k\geq 0$ and $s=1$ we have the explicit formula
\begin{align}
\dfrac{\del}{\del z}y^{-2k}\dfrac{\del}{\del z'}Q_{-k}(z,z';1)=-\dfrac{1}{\pi}\cdot\dfrac{1}{(z-z')^2}\left(\dfrac{z'-\bar z'}{\bar{z}-z'}\right)^{2k}\;,\label{Q}
\end{align}
where $y={\rm Im}\,z$. 

Furthermore, denote by $G_{k}(z,z';s)$ the integral kernel of the resolvent $\left(\Delta_k+\tfrac{(s-1)(s-2k)}{4}I\right)^{-1}$ of $\Delta_k$ on $X=\Gamma\backslash\HH$ (where $I$ is the identity operator in the Hilbert space $\mathcal{H}^{k,0}$). 
For $k<0$ and $s=1$ the Green's function $G_{k}(z,z';s)$ is a smooth function on $X\times X$ away from the diagonal (i.~e. for $z\neq z'$).
For $k=0$ we have the following Laurent expansion near $s=1$:
\begin{align}
G_{0}(z,z';s)=\dfrac{4}{|X|}\cdot\dfrac{1}{s(s-1)}+G_0(z,z')+ O(s-1)\label{G_0}
\end{align}
as $s\to 1$, where $|X|=2\pi\left(2g-2+n+\sum_{i=1}^l(1-1/m_i)\right)$ is the hyperbolic area of $X=\Gamma\backslash\HH$.
Then for any integer $k\geq 0$ we have
\begin{align}
\dfrac{\del}{\del z}y^{-2k}\dfrac{\del}{\del z'}\,G_{-k}(z,z';1)
=-\dfrac{1}{\pi}\sum_{\gamma\in\Gamma}\dfrac{1}{(z-\gamma z')^2}\left(\dfrac{\gamma\bar{z}'-\gamma z'}{\bar{z}-\gamma z'}\right)^{2k}
\gamma'(z')\gamma'(\bar{z}')^{-k}\;.\label{G}
\end{align}
This series converges absolutely and uniformly on compact sets for $z\neq\gamma z',\;\gamma\in\Gamma$. 

We now recall the definition of the Selberg zeta function. Let $\Gamma$ be a Fuchsian group of the first kind, and let $\chi:\Gamma\to U(1)$ be a unitary character. Put
\begin{align}
Z(s,\Gamma,\chi)=\prod_{\{\gamma\}}\, \prod_{i=0}^\infty \left(1-\chi(\gamma) N(\gamma)^{-s-i}\right)\;,\label{zeta}
\end{align}
where $\{\gamma\}$ runs over the set of classes of conjugate hyperbolic elements of $\Gamma$, and $N(\gamma)$ is the norm of $\gamma$ defined by the conditions
$N(\gamma)+1/N(\gamma)=|{\rm tr}\,\gamma|,\; N(\gamma)>1$ (in other words, $\log N(\gamma)$ is the length of the closed geodesic in the free homotopy class associated with $\gamma$). The product \eqref{zeta} converges absolutely for ${\rm Re}\,s>1$ and admits a meromorphic continuation to the complex $s$-plane.

Except for the last section, in what follows we will always assume that $\chi\equiv 1$ and will denote $Z(s,\Gamma,1)$ simply by $Z(s)$. The Selberg trace formula relates $Z(s)$ to the spectrum of the Laplacians on $\Gamma\backslash\HH$, and it is natural (cf. \cite{DPh}) to define the regularized determinants of the operators $\Delta_{-k}$ by the formula
\begin{align}
\det\Delta_{-k}=\begin{cases} Z'(1), & k=0,\\Z(k+1), & k\geq 1,\end{cases}\label{det}
\end{align}
(note that $Z(s)$ has a simple zero at $s=1$). 

\subsection{Deformation theory} \label{2.3}
We proceed with the basics of the deformation theory of Fuchsian groups. Let $\Gamma$ be a Fuchsian group of the first kind of signature $(g;n;m_1,\ldots,m_l)$. Consider the space of quasiconformal mappings of the upper half-plane $\HH$ that fix 0, 1 and $\infty$. Two quasiconformal mappings are equivalent if they coincide on the real axis. A mapping $f$ is compatible with $\Gamma$ if $f^{-1}\circ \gamma \circ f\in PSL(2,\mathbb{R})$ for all $\gamma\in\Gamma$. The space of equivalence classes of $\Gamma$-compatible mappings is called the Teichm\"uller space of $\Gamma$ and is denoted by $T(\Gamma)$. The space $T(\Gamma)$ is isomorphic to a bounded complex domain in $\mathbb{C}^{3g-3+n+l}$. The Teichm\"uller modular group ${\rm Mod}(\Gamma)$ acts on $T(\Gamma)$ by complex isomorphisms. Denote by ${\rm Mod}_0(\Gamma)$ the subgroup of ${\rm Mod}(\Gamma)$ consisting of pure mapping classes (i.~e. those fixing the punctures and elliptic points on $X$ pointwise). The factor $T(\Gamma)/{\rm Mod}_0(\Gamma)$ is isomorphic to the moduli space $\mathcal{M}_{g,n+l}$ of smooth complex algebraic curves of genus $g$ with $n+l$ labeled points.
\begin{remark} Note that $T(\Gamma)$, as well as the quotient space $T(\Gamma)/{\rm Mod}(\Gamma)$, actually depends not on the signature of $\Gamma$, but rather on its \emph{signature type}, the unordered set $r=\{r_{1},r_{2},\dots\}$, 
where $r_1=n$ and $r_{i}$ is the number of elliptic points of order $i,\; i=2,3,\ldots$ (see \cite{Bers}).
\end{remark}

The holomorphic tangent and cotangent spaces to $T(\Gamma)$ at the origin are isomorphic to $\Omega^{-1,1}(X)$ and $\Omega^{2,0}(X)$ respectively (where, as before, 
$X=\Gamma\backslash\HH$). 
Let $B^{-1,1}(X)$ be the unit ball in $\Omega^{-1,1}(X)$ with respect to the $L^{\infty}$ norm and let $\beta:B^{-1,1}(X)\rightarrow T(\Gamma)$ be the Bers map. It defines complex coordinates in the neighborhood of the origin in $T(\Gamma)$ by the assignment
$$
(\varepsilon_{1},\dots,\varepsilon_{d})\mapsto \Gamma^{\mu}=f^{\mu}\circ\Gamma\circ (f^{\mu})^{-1},
$$ 
where $\mu=\varepsilon_{1}\mu_{1}+\cdots +\varepsilon_{d}\mu_{d}$, $\mu_{1},\dots,\mu_{d}$ is a basis for $\Omega^{-1,1}(X)$, 
and $f^{\mu}$ is a quasiconformal mapping of $\HH$ that fixes 0, 1, $\infty$ and satisfies the Beltrami equation
$$
f^{\mu}_{\bar{z}}=\mu f^{\mu}_{z}.
$$
For $\mu\in\Omega^{-1,1}(X)$ denote by $\dfrac{\del}{\del\varepsilon_\mu}$
and $\dfrac{\del}{\del\bar\varepsilon_\mu}$ the partial derivatives along the holomorphic curve $\beta(\varepsilon\mu)$ in $T(\Gamma)$, 
where $\varepsilon\in\mathbb{C}$ is a small parameter.

The Cauchy-Riemann operators $\bar\del_{k}$ form a holomorphic ${\rm Mod}(\Gamma)$-invariant family of operators on $T(\Gamma)$. 
The determinant bundle $\lambda_{k}$ 
associated with $\bar\del_{k}$ is a holomorphic ${\rm Mod}(\Gamma)$-invariant line bundle on $T(\Gamma)$ whose fibers are given by the determinant lines
$\wedge^{\rm max}\ker\bar\del_{k}\otimes(\wedge^{\rm max}{\rm coker}\,\bar\del_{k})^{-1}$. Since the kernel and cokernel of $\bar\del_k$ are the spaces of harmonic
differentials $\Omega^{k,0}(X)$ and $\Omega^{k,1}(X)$ respectively, the line bundle $\lambda_k$ is Hermitian with the metric induced by the Hodge
scalar products in the spaces $\Omega^{p,q}(X)$ 
(note that each orbifold Riemann surface $X=\Gamma\backslash\HH$ inherits a natural metric of constant negative curvature $-1$). The corresponding norm in $\lambda_k$ we will 
denote by $||\cdot||_k$. 
Note that by duality between $\Omega^{k,0}(X)$ and $\Omega^{1-k,1}(X)$ the determinant line bundles $\lambda_k$ and $\lambda_{1-k}$ are isometrically isomorphic.

The Quillen norm in $\lambda_k$ is defined by the formula 
\begin{align}
||\cdot||_k^Q=\dfrac{||\cdot||_k}{\sqrt{\det\Delta_k}}\;\label{qm}
\end{align}
for $k\leq 0$ and is extended for all $k$ by the isometry $\lambda_{k}\cong \lambda_{1-k}$.
The determinant $\det\Delta_{k}$ defined via the Selberg zeta fuction is a smooth ${\rm Mod}(\Gamma)$-invariant function on $T(\Gamma)$. 

\section{Main results} \label{3}
Our objective is to compute the canonical connection and the curvature (or the first Chern form) of the Hermitian holomorphic line bundle $\lambda_k$ 
on $T(\Gamma)$. By Remark 1 $\lambda_k$ can be thought of as holomorphic $\mathbb{Q}$-line bundle on the moduli space $T(\Gamma)/{\rm Mod}(\Gamma)$.

\subsection{Connection form on the determinant bundle}
We start with computing the connection form on the determinant line bundle $\lambda_{-k}$ for $k>0$ relative to the Quillen metric. The following result generalizes Lemma 3 in \cite{TZ}:

\begin{theorem}
For any integer $k\geq 0$ and $\mu\in\Omega^{-1,1}(X)$ we have
\begin{align}
&\dfrac{\del}{\del\vep_\mu}{\Big |}_{\vep_\mu=0}\log\det\Delta_{-k}=\nonumber\\
&=-\int_{X}\del y^{-2k}\del'\,\left(G_{-k}(z,z';1)-Q_{-k}(z,z';1)\right){\Big |}_{z'=z}\mu(z)\,d^2z\;,\label{1st}
\end{align}
where $\del=\dfrac{\del}{\del z},\;\del'=\dfrac{\del}{\del z'}$, and $d^2z=\dfrac{dz\wedge d\bar{z}}{-2\sqrt{-1}}$ is the Euclidean area form on $\HH$.
\end{theorem}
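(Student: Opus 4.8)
The plan is to derive the variation of $\log\det\Delta_{-k}$ from the Selberg-zeta definition \eqref{det} by first computing the variation of the resolvent kernel $G_{-k}(z,z';s)$ as a function on $T(\Gamma)$, and then taking an appropriate limit in the spectral parameter $s$. Concretely, I would differentiate along the Bers curve $\beta(\vep_\mu)$ and use the standard resolvent identity: if $R_k(s)=\left(\Delta_k+\tfrac{(s-1)(s-2k)}{4}I\right)^{-1}$, then $\dfrac{\del}{\del\vep_\mu}R_k(s)=-R_k(s)\,\dot\Delta_{-k}\,R_k(s)$, where $\dot\Delta_{-k}$ denotes the first-order variation of the Laplacian under the deformation. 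The variation $\dot\Delta_{-k}$ is a first-order differential operator whose symbol is essentially multiplication by $\mu$ (this is the usual Ahlfors-type computation: the metric is held fixed in the sense that $X=\Gamma\backslash\HH$ always carries the curvature $-1$ metric, and the deformation enters through the complex structure, i.e. through $\mu$), so the variation of $\log\det$ reduces to a trace against $\mu$ of a derivative of the Green's function restricted to the diagonal.

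Next I would confront the fact that taking the naive trace of $R_{-k}(s)$ on the diagonal diverges: the integral kernel $G_{-k}(z,z';1)$ is singular as $z'\to z$, with the same local singularity as the full-plane kernel $Q_{-k}(z,z';1)$, since locally $X$ looks like $\HH$ (away from fixed points) and the short-distance behaviour of the resolvent is universal. This is why the statement is phrased with the difference $G_{-k}(z,z';1)-Q_{-k}(z,z';1)$: that difference is smooth across the diagonal, and applying $\del\,y^{-2k}\,\del'$ and restricting to $z'=z$ gives a well-defined $(1,1)$-type density on $X$ that can be integrated against $\mu(z)\,d^2z$. So the technical heart is to show that the regularization built into the Selberg-zeta definition of $\det\Delta_{-k}$ — via the trace formula and the analytic continuation of $Z(s)$ — produces exactly this subtraction of the free kernel, with no extra local counterterms surviving. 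For $k\geq 1$ where $\det\Delta_{-k}=Z(k+1)$ this is a direct computation: differentiate $\log Z(s)$ using the trace formula, evaluate at $s=k+1$ (away from the zero of $Z$), and identify the hyperbolic-conjugacy-class sum with the $\Gamma$-periodization \eqref{G} minus the identity term \eqref{Q}. For $k=0$ where $\det\Delta_0=Z'(1)$ one must handle the simple zero of $Z$ at $s=1$ and the pole of $G_0(z,z';s)$ at $s=1$ recorded in \eqref{G_0}; the $\tfrac{4}{|X|}\cdot\tfrac{1}{s(s-1)}$ term contributes a multiple of $\int_X\mu\,d^2z$, which vanishes because $\mu\in\Omega^{-1,1}(X)$ is orthogonal to constants (equivalently, harmonic Beltrami differentials integrate to zero against the area form). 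Hence the only surviving contribution is the finite part $G_0(z,z')$, again with $Q_0$ subtracted, matching \eqref{1st}.

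The main obstacle I anticipate is the careful bookkeeping at the boundary of $X$ — the punctures and, crucially here, the conical points — to justify that the trace-class manipulations and the interchange of $\del/\del\vep_\mu$ with the spectral integrals are legitimate, and that no boundary terms arise from integration by parts near the elliptic fixpoints. This is precisely where Lemma \ref{f-0} (and the disk expansion \eqref{mu-disk} for $\mu$) does the work: it controls the local behaviour of the relevant kernels and of $\mu$ near $u=0$, ensuring the integrand in \eqref{1st} is integrable and that the elliptic cone points do not produce distributional defects in this first-variation formula (they will, however, produce the extra terms in the curvature computation one derivative later). I would therefore structure the proof as: (1) set up the resolvent variation and reduce to a diagonal trace against $\mu$; (2) regularize by subtracting $Q_{-k}$, citing the universality of the short-distance singularity; (3) match against the Selberg-zeta definition \eqref{det}, treating $k\geq 1$ and $k=0$ separately and using \eqref{G_0} plus $\int_X\mu\,d^2z=0$ in the latter; (4) verify integrability and absence of boundary contributions at cusps and cone points via Lemma \ref{f-0} and \eqref{mu-disk}. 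The computation closely parallels Lemma 3 of \cite{TZ}, so the new content is entirely in step (4) — the treatment of the elliptic points.
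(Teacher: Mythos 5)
Your overall architecture (vary the resolvent, regularize by subtracting the free kernel $Q_{-k}$, match against the Selberg-zeta definition \eqref{det}, treat $k\geq1$ and $k=0$ via \eqref{G_0} and $\int_X\mu\,d^2z=0$) does follow the [TZ] template that the paper also starts from, but you have a genuine gap exactly where the paper's new content lies. When you ``identify the hyperbolic-conjugacy-class sum with the $\Gamma$-periodization \eqref{G} minus the identity term \eqref{Q},'' this is not correct as stated: the $\Gamma$-periodization minus the identity term contains the parabolic \emph{and elliptic} contributions as well. The [TZ]-style variational formula (the analogue of \eqref{vardet}) therefore produces, besides $G_{-k}-Q_{-k}$, an explicit extra sum over elliptic elements, and the content of the theorem is that its integral against $\mu$ vanishes, i.e.\ \eqref{ec}. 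This is not a matter of ``integrability and absence of boundary contributions'': it is a vanishing statement that must be proved. The paper does it by reducing to one elliptic generator of order $m$, passing to the disk via $u=(z-z_0)/(z-\bar z_0)$, using the explicit kernel formula \eqref{Q} (giving \eqref{uz}), expanding $\mu$ as in \eqref{mu-disk}, and observing that every surviving Fourier mode $e^{\sqrt{-1}jm\theta}$, $j\geq1$, integrates to zero over $\theta\in[0,2\pi/m]$. Your proposal never states this cancellation, so as written it would not yield \eqref{1st} but only a formula with the unresolved elliptic terms still present.

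Two further points. First, you assign the local control near the cone points to Lemma \ref{f-0}; that lemma concerns $f_{\mu\bar\nu}=(\Delta_0+\tfrac12)^{-1}(\mu\bar\nu)$ and enters only in the curvature computation (Theorem \ref{theorem 2}), not here — the relevant input for Theorem 1 is the expansion \eqref{mu-disk} itself, which gives $\mu(0)=0$ for $m>2$ and hence absolute convergence of the elliptic term before one shows it vanishes. Second, you do not address the case $m=2$: there $\mu(0)$ need not vanish, the elliptic contribution has a nonintegrable $1/u^2$ singularity, and the integral must be interpreted as a principal value as in Remark \ref{vp}; the paper then needs an extra argument (the angular vanishing on $\D\setminus B_\delta$ plus the fact that the total area of the translates $\gamma B_\delta$ tends to $0$) to conclude. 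Without the vanishing of \eqref{ec} and the $m=2$ principal-value treatment, the proof is incomplete at precisely the step that distinguishes the orbifold case from \cite{TZ}.
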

\begin{remark}\label{vp}
The integral in \eqref{1st} is absolutely convergent if $m_i>2$ for all $i=1,\ldots,l$. If $m_i=2$ for some $i$, then this integral should be understood in the principal value sense as follows. Let $z_i$ be the fixpoint of the elliptic generator $T_i\in\Gamma$ of order 2, and consider the mapping 
$h_i:\HH\rightarrow \D,\; h_i(z)=\dfrac{z-z_i}{z-\bar{z}_i}$. Denote by $B_\delta=\{u\in \D\,\big|\,|u|<\delta\}$ the disk of radius $\delta$ in $\D$ with center at 0. 
Since $\Gamma$ is discrete, for $\delta$ small enough we have
$h_i^{-1}(B_\delta)\cap\,\gamma h_j^{-1}(B_\delta)=\emptyset$ unless $i=j$ and $\gamma$ is either $I$ or $T_i$. The subset
\begin{align*}
\HH_\delta=\HH\setminus\big(\textstyle\bigcup_{\{i\,|\,m_i=2\}}\textstyle\bigcup_{\gamma\in\Gamma_i\backslash\Gamma}\,\gamma h_i^{-1}(B_\delta)\big)
\end{align*} 
is $\Gamma$-invariant, where $\Gamma_i$ denotes the cyclic group of order $2$ generated by $T_i$. The factor $X_\delta=\Gamma\backslash\HH_\delta$ is an orbifold Riemann surface
with holes centered at the conical points of angle $\pi$. The integral in the right hand side of \eqref{1st} we then define as
\begin{align*}
\lim_{\delta\to 0}\,\int_{X_\delta}\del y^{-2k}\del'\,\left(G_{-k}(z,z';1)-Q_{-k}(z,z';1)\right){\Big |}_{z'=z}\mu(z)\,d^2z\;.
\end{align*}
\end{remark}
\begin{proof}

We will use the results of \cite{TZ} profoundly. Repeating verbatim the proof of Lemma 3 in \cite{TZ} we get for $k\geq 0$
\begin{align}
\dfrac{\del}{\del\vep_{\mu}}&{\Big |}_{\vep_\mu=0}\log\det\Delta_{-k}=-\int\limits_{X}\del\del'\Big(G_{0}(z,z';k+1)-Q_{0}(z,z';k+1)-\nonumber\\
&-\sum_{\substack{\gamma\in\Gamma,\\ \gamma \;\mathrm{parabolic}}}Q_{0}(z,\gamma z';k+1)
-\sum_{\substack{\gamma\in\Gamma,\\ \gamma\;\mathrm{elliptic}}}Q_{0}(z,\gamma z';k+1)\Big)\Big|_{z'=z}\mu(z)d^{2}z\;.\label{vardet}
\end{align}
Note that by Lemma 3 in \cite{TZ} the contribution from parabolic elements to the right hand side of \eqref{vardet} vanishes, i.~e.
\begin{align*}
\int\limits_{X}\sum_{\substack{\gamma\in\Gamma,\\ \gamma \;\mathrm{parabolic}}}\del\del'Q_{0}(z,\gamma z';k+1)\Big|_{z'=z}\mu(z)d^{2}z=0\;.
\end{align*}
By Lemma 4 in \cite{TZ} we can further rewrite \eqref{vardet} as follows:
\begin{align*}
\dfrac{\del}{\del\vep_{\mu}}{\Big |}_{\vep_\mu=0}\log\det\Delta_{-k}=&-\int\limits_{X}\del y^{-2k}\del'\Big(G_{-k}(z,z';1)-Q_{-k}(z,z';1)\\
&-\sum_{\substack{\gamma\in\Gamma,\\ \gamma\;\mathrm{elliptic}}}Q_{-k}(z,\gamma z';1)\gamma'(\bar{z}')^{-k}\Big)\Big|_{z'=z}\mu(z)d^{2}z\;.
\end{align*}
The integrand in the right hand side is smooth and the integral is absolutely convergent, cf. \eqref{G}. We need to show that
\begin{align}
\int\limits_{\Gamma\backslash\HH}\Big(\sum_{\substack{\gamma\in\Gamma,\\ \gamma\;\mathrm{elliptic}}}\del y^{-2k}\del'Q_{-k}(z,\gamma z';1)\gamma'(\bar{z}')^{-k}\Big)\Big|_{z'=z}\mu(z)d^{2}z=0\label{ec}
\end{align}
(if there is $m_i=2$ we understand this integral as the principle value, see Remark \ref{vp}).

Without loss of generality we may assume that $l=1$ and $\Gamma$ has one elliptic generator $T$ of order $m$ with fixpoint $z_0\in \HH$. Then by \eqref{Q} we have
\begin{align*}
-\sum_{\substack{\gamma\in\Gamma,\\ \gamma\;\mathrm{elliptic}}}&\del y^{-2k}\del'Q_{-k}(z,\gamma z';1)\gamma'(\bar{z}')^{-k}\Big)\Big|_{z'=z}\\
& =\dfrac{1}{\pi}\sum_{\substack{\gamma\in\Gamma,\\ \gamma \;\mathrm{elliptic}}}\dfrac{1}{(z-\gamma z)^{2}}\left(\dfrac{\gamma z-\gamma\bar{z}}{\bar{z}-\gamma z}\right)^{2k}\gamma'(z)\gamma'(\bar{z})^{-k}\\
&=\dfrac{(z-\bar{z})^{2k}}{\pi}\sum_{\substack{\gamma\in\Gamma,\\ \gamma \;\mathrm{elliptic}}}\dfrac{\gamma'(z)^{k+1}}{(z-\gamma z)^{2}(\bar{z}-\gamma z)^{2k}}\\
&=\dfrac{(z-\bar{z})^{2k}}{\pi}\sum_{\sigma\in\Gamma_0\bk\Gamma}\;\sum_{i=1}^{m-1}
\dfrac{(\sigma^{-1} T^{\,i}\sigma)'(z)^{k+1}}{(z-\sigma^{-1} T^{\,i}\sigma z)^{2}(\bar{z}-\sigma^{-1} T^{\,i}\sigma z)^{2k}}\\
&=\dfrac{(z-\bar{z})^{2k}}{\pi}\sum_{\sigma\in\Gamma_0\bk\Gamma}\;\sum_{i=1}^{m-1}
\dfrac{\sigma'(z)\sigma'(\bar{z})^{k}( T^{\,i}\sigma)'(z)^{k+1}}{(\sigma z- T^{\,i}\sigma z)^{2}(\sigma\bar{z}- T^{\,i}\sigma z)^{2k}}\\
&=\dfrac{1}{\pi}\sum_{\sigma\in\Gamma_0\bk\Gamma}\;\sum_{i=1}^{m-1}
\dfrac{(\sigma z-\sigma \bar{z})^{2k}(T^{\,i})'(\sigma z)^{k+1}\sigma'(z)^{2}}{(\sigma z-T^{\,i}\sigma z)^{2}(\sigma\bar{z}-T^{\,i}\sigma z)^{2k}}\\
&=\dfrac{1}{\pi}\sum_{\sigma\in\Gamma_0\bk\Gamma}\phi(\sigma z)\sigma'(z)^2\;,
\end{align*}
where $\Gamma_0\cong\mathbb{Z}/m\mathbb{Z}$ is the cyclic group generated by $T$ (the stabilizer of $z_0$ in $\Gamma$), and
\begin{align*}
\phi(z)=\sum_{i=1}^{m-1}\dfrac{(z-\bar{z})^{2k}(T^{\,i})'(z)^{k+1}}{(z-T^{\,i} z)^{2}(\bar{z}-T^{\,i}z)^{2k}}
\end{align*}

Since $\phi(Tz)T'(z)^2=\phi(z)$, it is easy to check that the last expression in the above formula is a (meromorphic) quadratic differential on $X$. Using the standard substitution $u=\dfrac{z-z_0}{z-\bar{z}_0}$ we get
\begin{align}
\dfrac{(z-\bar{z})^{2k}(T^{\,i})'(z)^{k+1}}{(z-T^{\,i} z)^{2}(\bar{z}-T^{\,i} z)^{2k}}
=\dfrac{\omega^{i(k+1)}}{(1-\omega^{i})^{2}\,u^{2}}\cdot\dfrac{(1-|u|^{2})^{2k}}{(1-\omega^{i}|u|^{2})^{2k}}\left(\dfrac{du}{dz}\right)^{2}\;.\label{uz}
\end{align}
Since $\mu(0)=0$ for $m>2$, see \eqref{mu-disk}, the integral in the left hand side of \eqref{ec} is absolutely convergent, and we have
\begin{align*}
&\int\limits_{X}\Big(\sum_{\substack{\gamma\in\Gamma,\\ \gamma\;\mathrm{elliptic}}}\del y^{-2k}\del'Q_{-k}(z,\gamma z';1)\gamma'(\bar{z}')^{-k}\Big)\Big|_{z'=z}\mu(z)\,d^{2}z\\
&=\dfrac{1}{\pi}\sum_{i=1}^{m-1}\dfrac{\omega^{i(k+1)}}{(1-\omega^{i})^{2}}\int\limits_{\Gamma_0\backslash\D}\left(\dfrac{1-|u|^{2}}{1-\omega^{i}|u|^{2}}\right)^{2k}\dfrac{\mu(u)}{u^{2}}\,d^{2}u\\
&=\dfrac{1}{4\pi}\sum_{i=1}^{m-1}\dfrac{\omega^{i(k+1)}}{(1-\omega^{i})^{2}}\int\limits_{\Gamma_0\backslash\D}\left(\dfrac{1-|u|^{2}}{1-\omega^{i}|u|^{2}}\right)^{2k}(1-|u|^{2})^{2}\sum_{j=1}^{\infty}\bar{a}_{jm}\bar{u}^{jm-2}\dfrac{d^{2}u}{u^2}\\
&=\dfrac{1}{4\pi}\sum_{i=1}^{m-1}\dfrac{\omega^{i(k+1)}}{(1-\omega^{i})^{2}}\sum_{j=1}^{\infty}\bar{a}_{jm}\int_{0}^{\frac{2\pi}{m}}\int_{0}^{1}\dfrac{(1-r^{2})^{2k+2}}{(1-\omega^{i}r^{2})^{2k}}\,r^{jm-3}e^{\sqrt{-1}jm\theta}drd\theta\\
&=0,
\end{align*}
that proves the theorem for $m>2$ (in the last line we used polar coordinates $u=re^{\sqrt{-1}\theta}$ on $\D$).

We have to be more careful in the case $m=2$, since the contribution from elliptic elements is no longer absolutely convergent and should be considered as the principal value, see Remark \ref{vp}. From now on we assume that $\Gamma$ acts on the unit disk $\D$, so that $\Gamma_0$ is generated by $\omega=-1$. Since $\Gamma$ is discrete, there exists $\min_{\gamma\in\Gamma/\{\pm 1\},\;\gamma\neq\pm 1}|\gamma(0)|>0$. Therefore, we can choose a small $\delta$ such that
$B_\delta\cap\,\gamma B_\delta=\emptyset$ unless $\gamma=\pm 1$. The set $\D_\delta=\D\setminus(\cup_{\gamma\in\Gamma/\{\pm 1\}}\,\gamma B_\delta)$ is $\Gamma$-invariant, 
and the factor $X_\delta=\Gamma\backslash\D_\delta$ is a Riemann surface with a small hole centered at the conical point. In this case we have
\begin{align*}
&\int\limits_{X_\delta}\Big(\sum_{\substack{\gamma\in\Gamma,\\ \gamma\;\mathrm{elliptic}}}\del y^{-2k}\del'Q_{-k}(z,\gamma z';1)\gamma'(\bar{z}')^{-k}\Big)\Big|_{z'=z}\mu(z)\,d^{2}z\\
&=\dfrac{(-1)^{k+1}}{4\pi}\int\limits_{\D_\delta/\{\pm 1\}}\left(\dfrac{1-|u|^{2}}{1+|u|^{2}}\right)^{2k}\dfrac{\mu(u)}{u^{2}}\,d^{2}u\\
&=\dfrac{(-1)^{k+1}}{16\pi}\int\limits_{(\D\setminus B_\delta)/\{\pm 1\}}\left(\dfrac{1-|u|^{2}}{1+|u|^{2}}\right)^{2k}(1-|u|^{2})^{2}\sum_{j=1}^\infty\bar{a}_{jm}\bar{u}^{jm-2}\,\dfrac{d^{2}u}{u^2}\\
&\hspace{1in}-\dfrac{(-1)^{k+1}}{4\pi}\sum_{\substack{\gamma\in\Gamma/\{\pm 1\}\\\gamma\neq\pm 1}}\int_{\gamma B_\delta}\left(\dfrac{1-|u|^{2}}{1+|u|^{2}}\right)^{2k}\dfrac{\mu(u)}{u^{2}}\,d^{2}u\;.
\end{align*}
For the first integral in the last line we have
\begin{align*}
\int\limits_{(\D\setminus B_\delta)/\{\pm 1\}}\left(\dfrac{1-|u|^{2}}{1+|u|^{2}}\right)^{2k}(1-|u|^{2})^{2}\sum_{j=1}^\infty\bar{a}_{jm}\bar{u}^{jm-2}\,\dfrac{d^{2}u}{u^2}=0
\end{align*}
by the same reason as in the case $m>2$ (in polar coordinates $u=re^{\sqrt{-1}\theta}$ the integral over $\theta$ vanishes). As for the sum of integrals, since the integrand is uniformly bounded on $\D\setminus B_\delta$ and the (Euclidean) area of
the union $\cup_{\gamma\in\Gamma/\{\pm 1\},\;\gamma\neq\pm 1}\,\gamma B_\delta$ tends to 0 as $\delta\rightarrow 0$ we have
\begin{align*}
\sum_{\substack{\gamma\in\Gamma/\{\pm 1\}\\\gamma\neq\pm 1}}\int_{\gamma B_\delta}\left(\dfrac{1-|u|^{2}}{1+|u|^{2}}\right)^{2k}\dfrac{\mu(u)}{u^{2}}\,d^{2}u
\underset{\delta\to 0}{\xrightarrow{\hspace*{1cm}}}0\,, 
\end{align*}
which proves the theorem.
\end{proof}

Later we will need to know the behavior of the quadratic differential
\begin{align*}%\label{r-k-0}
R_{-k}(z)=-\del y^{-2k}\del'\,\left(G_{-k}(z,z';1)-Q_{-k}(z,z';1)\right){\Big |}_{z'=z}
\end{align*}
near the elliptic fixpoints of $\Gamma$. Let $T$ be an elliptic generator of $\Gamma$ of order $m$ with fixpoint $z_0$. The standard isomorphism $\HH\to \D$ given by
$u=\frac{z-z_0}{z-\bar{z}_0}$ maps $z_0\in \HH$ to $0\in D$, so that $T$ becomes the multiplication by $\omega=e^{2\pi\sqrt{-1}/m}$. Slightly abusing notation, we put $R_{-k}(u)du^2=R_{-k}(z)dz^2$. Then we have
\begin{lemma}\label{asymp}
The quadratic differential $R_{-k}$ on $\D$ has the following asymtotics as $u\to 0$:
\begin{align}
R_{-k}(u) & =-\dfrac{m^2}{2\pi}\left(B_2\left(\left\{\dfrac{k}{m}\right\}\right)-\dfrac{1}{6m^2}\right)\,\dfrac{1}{u^2}+O(1)\label{0}\\
\intertext{and}
\frac{\del}{\del\bar{u}}R_{-k}(u) & =-\frac{2km}{\pi}\left(B_{1}\left(\left\{\frac{k}{m}\right\}\right)+\frac{1}{2m}\right)\frac{1}{u} +O(1),\label{0-1}
\end{align}
where $B_{1}(x)=x-\frac{1}{2}$ and $B_2(x)=x^2-x+\tfrac{1}{6}$ are Bernoulli polynomials, and $\{x\}$ denotes the fractional part of $x\in\Q$.
\end{lemma}
\begin{proof}
Using \eqref{uz} we easily see that
\begin{align}
R_{-k}(u)&=\dfrac{1}{\pi}\sum_{i=1}^{m-1}\dfrac{\omega^{i(k+1)}}{(1-\omega^{i})^{2}\,u^{2}}\cdot\dfrac{(1-|u|^{2})^{2k}}{(1-\omega^{i}|u|^{2})^{2k}}+O(1) \nonumber\\
&=\dfrac{1}{\pi}\sum_{i=1}^{m-1}\dfrac{\omega^{i(k+1)}}{(1-\omega^{i})^{2}}\cdot\dfrac{1}{u^{2}}+O(1)\quad {\rm as}\; u\to 0.\label{R-k-0}
\end{align}
We are going to show now that
\begin{align}
\sum_{i=1}^{m-1}\dfrac{\omega^{i(k+1)}}{(1-\omega^{i})^{2}}=-\dfrac{m^{2}-1}{12} + \dfrac{\bar{k}(m-\bar{k})}{2}\;\label{iii},
\end{align}
where $\bar{k}$ is the least nonnegative residue of $k$ modulo $m$.
We start with the simple identity
$$\sum_{i=1}^{m-1}\log(x-\omega^{i})=\log(1+x+\cdots+x^{m-1})\;.$$
Differentiating it once with respect to $x$ and putting $x=1$ we get
\begin{align}
\sum_{i=1}^{m-1}\dfrac{1}{1-\omega^{i}}=\dfrac{m-1}{2}\;,\label{i}
\end{align} 
Differentiating it twice, putting $x=1$ and applying \eqref{i} we get 
\begin{align}
\sum_{i=1}^{m-1}\dfrac{\omega^{i}}{(1-\omega^{i})^{2}}=-\dfrac{m^{2}-1}{12}\;.\label{ii}
\end{align}
To prove \eqref{iii} we use the identity
$$\dfrac{x^{\bar{k}+1}}{(1-x)^{2}}=\dfrac{x}{(1-x)^{2}}-\dfrac{\bar{k}}{1-x}+\sum_{j=0}^{\bar{k}-1}(\bar{k}-j)x^{j}$$
together with (\ref{i}) and (\ref{ii}) to obtain 
\begin{align*}
\sum_{i=1}^{m-1}\dfrac{\omega^{i(k+1)}}{(1-\omega^{i})^{2}} & =-\dfrac{m^{2}-1}{12}-\dfrac{\bar{k}(m-1)}{2}+\sum_{j=0}^{\bar{k}-1}(\bar{k}-j)\sum_{i=1}^{m-1}\omega^{ij}\\
 & =-\dfrac{m^{2}-1}{12}-\dfrac{\bar{k}(m-1)}{2}+\bar{k}(m-1) -\sum_{j=1}^{\bar{k}-1}(\bar{k}-j)\\
 &=-\dfrac{m^{2}-1}{12} + \dfrac{\bar{k}(m-\bar{k})}{2}\\
 &=-\dfrac{m^2}{2}\left(B_2\left(\left\{\dfrac{k}{m}\right\}\right)-\dfrac{1}{6m^2}\right)\,,
\end{align*}
which proves \eqref{0}. %the lemma.

To prove \eqref{0-1}, we differentiate  \eqref{R-k-0} to obtain
$$\frac{\del}{\del\bar{u}}R_{-k}(u) = \dfrac{2k}{\pi}\sum_{i=1}^{m-1}\dfrac{\omega^{i(k+1)}}{\omega^{i}-1}\cdot\dfrac{1}{u}+O(1)\quad {\rm as}\; u\to 0,$$
and use  elementary identity
$$\sum_{i=1}^{m-1}\dfrac{\omega^{i(k+1)}}{\omega^{i}-1}=-m\left(B_{1}\left(\left\{\frac{k}{m}\right\}\right)+\frac{1}{2m}\right),$$
which is proved in the same way as \eqref{iii}.
\end{proof}

\subsection{The first Chern form}\label{3.2}
Our next objective is to compute the curvature, or the first Chern form $c_1(\lambda_{-k},\,||\cdot||_{-k}^Q$), of the determinant line bundle $\lambda_{-k}$ endowed with Quillen's metric, see \eqref{qm}. 
To formulate the theorem we introduce three kinds of metrics:
\begin{itemize}
\item {\em Weil-Petersson metric.} For $\mu,\,\nu\in\Omega^{-1,1}(X)$ understood as tangent vectors to the Teichm\"{u}ller space $T(\Gamma)$, the Weil-Petersson scalar product is defined by the formula
\begin{align}
\left\langle\dfrac{\del}{\del\vep_\mu},\dfrac{\del}{\del\vep_\nu}\right\rangle_{{\rm WP}}=\int_{X}\mu(z)\overline{\nu (z)}d\rho(z)\;,\label{WP}
\end{align}
where $d\rho$ is the hyperbolic area form on $X=\Gamma\backslash\HH$. This metric is K\"ahler, and its symplectic form will be denoted by $\omega_{{\rm WP}}$.
\item {\em Cuspidal metric} (also known as Takhtajan-Zograf metric). For the parabolic generator $S_i$ of $\Gamma$ this metric is defined as
\begin{align}
\left\langle\dfrac{\del}{\del\vep_\mu},\dfrac{\del}{\del\vep_\nu}\right\rangle_{i}^{\rm cusp}=\int_{X}E_i(z,2)\mu(z)\overline{\nu (z)}d\rho(z)\;,\label{TZ}
\end{align}
where $E_i(z,s)$ is the $i$-th Eisenstein series for $\Gamma$. By definition,
\begin{align}
E_i(z,s)=\sum_{\gamma\in\langle S_i\rangle\backslash\Gamma}{\rm Im}(\sigma_i^{-1}\gamma z)^s,\qquad i=1,\ldots,n\,,\label{eis}
\end{align}
where $\langle S_i\rangle$ denotes the cyclic subroup of $\Gamma$ generated by $S_i$, and $(\sigma_i^{-1}S_i\sigma) z=z\pm 1$.
The series is absolutely convergent for ${\rm Re}\,s>1$, is positive for $s=2$ and satisfies the equation 
$$
\Delta_0E_i(z,s)=\tfrac{1}{4}s(1-s)E_i(z,s).
$$ 
For any $i=1,\ldots,n$ this metric is K\"ahler, its symplectic form we denote by $\omega_i^{\rm cusp}$
and put $\omega_{\rm cusp}=\sum_{i=1}^n\omega_i^{\rm cusp}$.
\item {\em Elliptic metric.} For the elliptic generator $T_j$ of $\Gamma$ define
\begin{align}
\left\langle\dfrac{\del}{\del\vep_\mu},\dfrac{\del}{\del\vep_\nu}\right\rangle_{j}^{\rm ell}=\int_{X}G(z_j,z)\mu(z)\overline{\nu (z)}d\rho(z)\;,\label{ell}
\end{align}
where $z_j$ is the fixpoint of $T_j$, and $G(z,z')=G_{0}(z,z';2)$ is the integral kernel of $\left(\Delta_0+\tfrac{1}{2}\right)^{-1}$. As we will see later, the metrics $\langle\;,\;\rangle_j^{\rm ell}$ are also K\"ahler. Denote by $\omega_j^{\rm ell}$ the $(1,1)$-form
$$
\omega_{j}^{\rm ell}\left(\dfrac{\del}{\del\vep_\mu},\dfrac{\del}{\del\vep_\nu}\right)= 
-\frac{1}{2}\,{\rm Im}\left\langle\dfrac{\del}{\del\vep_\mu},\dfrac{\del}{\del\vep_\nu}\right\rangle_{j}^{\rm ell}.
$$
\end{itemize}

The main result of this paper is
\begin{theorem}\label{theorem 2}
For integer $k\geq 0$ we have
\begin{align}\label{main}
&c_1(\lambda_{-k}, ||\cdot||_{-k}^Q)=\dfrac{6k^2+6k+1}{12\pi^2}\omega_{\rm WP}-\dfrac{1}{9}\omega_{\rm cusp}\nonumber\\
&+\dfrac{1}{4\pi}\sum_{j=1}^l \left(2k B_{1}\left(\left\{\frac{k}{m_{j}}\right\}\right) -
B_2\left(\left\{\dfrac{k}{m_j}\right\}\right) +\dfrac{6k-1}{6m_j}\right)\,\omega_j^{\rm ell}\;,
\end{align}
where, as above, $B_{1}(x)=x-\frac{1}{2}$ and $B_2(x)=x^2-x+\tfrac{1}{6}$ are Bernoulli polynomials, and $\{x\}$ denotes the fractional part of $x$. 
\end{theorem}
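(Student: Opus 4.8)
The plan is to compute the first Chern form as $c_1(\lambda_{-k},\|\cdot\|_{-k}^Q) = \frac{\sqrt{-1}}{2\pi}\,\bar\del\del\log\det\Delta_{-k} + c_1(\lambda_{-k},\|\cdot\|_{-k})$, where the second term is the Chern form with respect to the $L^2$ (Hodge) metric. First I would differentiate the formula \eqref{1st} of Theorem~1 once more, in an antiholomorphic direction $\del/\del\bar\vep_\nu$. Applying Stokes' theorem / a variational formula for the Green's functions $G_{-k}$ and $Q_{-k}$ and using the explicit kernels \eqref{G}, \eqref{Q}, the $\bar\del\del$-derivative of $\log\det\Delta_{-k}$ should reduce to integrals of the form $\int_X G(z,z')\mu\bar\nu\,d\rho$ plus Weil--Petersson-type terms, exactly as in \cite{TZ}, with the new feature being boundary contributions localized at the elliptic fixpoints. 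This is where Lemma~\ref{f-0} and Lemma~\ref{asymp} enter: the local expansion \eqref{f_0} controls the behavior of $f_{\mu\bar\nu}$ at $u=0$, and the singular coefficient in \eqref{0} is precisely $-\frac{m^2}{2\pi}(B_2(\{k/m\})-\frac{1}{6m^2})$, which after integrating against $\mu$ over a shrinking disk $B_\delta$ and taking $\delta\to 0$ produces the residue term $G(z_j,z)$ — i.e.\ the elliptic metric \eqref{ell}.

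The second ingredient is the Chern form of the Hodge metric on $\lambda_{-k}$. For $k>0$ the cokernel $\Omega^{-k,1}(X)$ is dual to $\Omega^{k+1,0}(X)$ (cusp forms of weight $2(k+1)$) and the kernel $\Omega^{-k,0}(X)=0$, so $\lambda_{-k}$ is, up to the determinant-of-cohomology bookkeeping, the determinant of the bundle of weight-$2(k+1)$ cusp forms with its Petersson metric. Its curvature is computed by the same method as in \cite{ZT,TZ}: differentiate the Gram matrix of an orthonormal frame, use the Ahlfors-type formulas for the variation of the hyperbolic metric and of holomorphic $k$-differentials under a Beltrami deformation $\mu$, and reduce everything to $\int_X(\ldots)\mu\bar\nu\,d\rho$. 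The outcome is a multiple of $\omega_{\mathrm{WP}}$ together with cuspidal and elliptic correction terms; the cuspidal term $-\frac19\omega_{\mathrm{cusp}}$ already appears in \cite{TZ} (it comes from the non-absolute convergence of the Eisenstein-type series near the cusps), and a Bernoulli-polynomial elliptic term arises from the pole of the relevant Green's kernel at the conical points, handled again via Lemma~\ref{asymp}. Combining the two computations, the coefficient of $\omega_{\mathrm{WP}}$ is $\frac{1}{12\pi^2}(\text{something from }\det) + \frac{1}{12\pi^2}(\text{something from Hodge})$ which should assemble to $\frac{6k^2+6k+1}{12\pi^2}$ exactly as the Riemann--Roch heuristic $\frac{1}{12\pi^2}\cdot 6(k+\frac12)^2\omega_{\mathrm{WP}}$ predicts, and the coefficient of each $\omega_j^{\mathrm{ell}}$ is $-\frac{1}{4\pi}m_j(B_2(\{k/m_j\})-\frac{1}{6m_j^2})$, matching the residue extracted from \eqref{0}.

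Finally I would verify K\"ahlerness of the elliptic metrics \eqref{ell}: since the total form $c_1(\lambda_{-k},\|\cdot\|_{-k}^Q)$ is $d$-closed for every $k\ge0$, and $\omega_{\mathrm{WP}}$, $\omega_{\mathrm{cusp}}$ are already known to be K\"ahler (closed $(1,1)$-forms), choosing two different values of $k$ with distinct fractional parts $\{k/m_j\}$ and taking differences shows each $\omega_j^{\mathrm{ell}}$ is itself a closed real $(1,1)$-form, hence K\"ahler (positivity being clear from the positivity of the kernel $G(z_j,z)$ of $(\Delta_0+\tfrac12)^{-1}$).

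\textbf{Main obstacle.} The delicate point is the principal-value analysis at the order-two elliptic points (the $m_j=2$ case), exactly as flagged in Remark~\ref{vp}: there the series defining the relevant kernels is not absolutely convergent, the expansion \eqref{f_0} picks up the extra $-4\mu(0)\bar\nu(0)$ term, and one must show that the regularization $X_\delta$ of Remark~\ref{vp} commutes with the second ($\bar\del$) differentiation and that the $\delta\to0$ limit of the boundary integral over $\del B_\delta$ produces precisely $G(z_j,z)\mu\bar\nu$ with the stated coefficient and no leftover anomalous term. Controlling this boundary term uniformly — and checking that the $m=2$ value $B_2(\{k/2\})-\frac{1}{24}$ of the Bernoulli expression is reproduced by the principal-value computation — is the part that requires genuine care rather than bookkeeping.
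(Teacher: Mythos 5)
Your overall architecture is the same as the paper's: split $c_1(\lambda_{-k},\|\cdot\|_{-k}^Q)$ into the $L^2$ Chern form plus $\tfrac{\sqrt{-1}}{2\pi}\,\partial\bar\partial\log\det\Delta_{-k}$ (formulas \eqref{c1}--\eqref{c1_Q}), differentiate \eqref{1st} in the antiholomorphic direction, apply Wolpert's formula and Stokes, and extract the elliptic contribution from the local behaviour at the fixpoints via Lemmas \ref{f-0} and \ref{asymp}. But your bookkeeping misplaces where the special terms come from: in the paper the curvature of the Hodge metric is the pure operator-trace formula \eqref{c1} (Lemma 5 of \cite{TZ}) and contains no cuspidal and no elliptic terms; the term $-\tfrac19\omega_{\rm cusp}$ is $I_5$ in \eqref{6I}, and the entire Bernoulli/elliptic contribution is $I_3+I_6$, all arising as boundary terms in the second variation of $\log\det\Delta_{-k}$. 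Your proposal assigns the cusp term and an additional elliptic term to the Hodge-metric computation; carried out literally this would double count them (the trace terms in \eqref{c1} are cancelled by the trace terms in $I_1$ and $I_4$, nothing more).

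The genuine gap is the $m_j=2$ case, which you correctly single out as the main obstacle but do not resolve. The missing ingredient is the term the paper calls $I_3$ in \eqref{3I}: because the excised disk in the principal-value regularization of Remark \ref{vp} is centered at the \emph{moving} fixpoint $f^{\vep\nu}(z_0)$, differentiating the regularized integral in $\bar\vep$ produces an extra boundary integral coming from the variation of the domain of integration. For $m>2$ it vanishes since $\mu(0)=0$, but for $m=2$ it equals $\tfrac{c(2,k)}{2}\,\mu(0)\overline{\nu(0)}$ and exactly cancels the anomalous $-2\mu(0)\overline{\nu(0)}$ entering $c_2$ in Lemma \ref{f-0}(i) through $I_6$, so that $I_3+I_6$ is proportional to $\int_X G(0,u)\mu\bar\nu\,d\rho$ alone. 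Without identifying this domain-variation term you cannot show that ``no leftover anomalous term'' survives, which is precisely the point you leave open. As a side remark, your closedness argument for the individual $\omega_j^{\rm ell}$ by varying $k$ only separates elliptic points of distinct orders $m_j$; K\"ahlerness of each $\omega_j^{\rm ell}$ is established in the paper by exhibiting a local potential (Section \ref{potential}) and is not needed for Theorem \ref{theorem 2} itself.
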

\begin{remark}\label{isom}
This result holds for $k<0$ as well, because the Hermitian line bundles $\big(\lambda_k\,,||\cdot||_k^Q\big)$ and $\big(\lambda_{1-k}\,,||\cdot||_{1-k}^Q\big)$
are isometrically isomorphic.
\end{remark}
\begin{proof}
As before, without loss of generality we assume that $l=1$ and $\Gamma$ has one elliptic generator $T$ of order $m$ with fixpoint $z_0\in H$.
We start with \eqref{1st}, where for $m=2$ we understand the integral in the right hand side as the principle value as described above. We have
\begin{align}
&\dfrac{\del^2}{\del\vep_\mu\del\bar\vep_\nu}{\Big |}_{\vep_\mu=\vep_\nu=0}\log\det\Delta_{-k}=\nonumber\\
&=-\dfrac{\del}{\del\bar\vep}{\Big |}_{\vep=0} \int\limits_{X^{\vep\nu}}\del y^{-2k}\del'\,\left(G^{\vep\nu}_{-k}(z,z';1)-Q_{-k}(z,z';1)\right){\Big |}_{z'=z}\mu^{\vep\nu}(z)\,d^2z\nonumber\\
&=\dfrac{\del}{\del\bar\vep}{\Big |}_{\vep=0} \int\limits_{X^{\vep\nu}}R^{\vep\nu}_{-k}(z)\mu^{\vep\nu}(z)\,d^2z\,.\label{2nd}
\end{align}
Here we use the following notation: 
\begin{itemize}
\item[$\circ$] $\Gamma^{\vep\nu}=f^{\vep\nu}\circ\Gamma\circ(f^{\vep\nu})^{-1}$, where $f^{\vep\nu}:\mathbb{H}\to \mathbb{H}$ is the Fuchsian deformation satisfying the Beltrami equation $$f^{\vep\nu}_{\bar z}=\vep\nu f^{\vep\nu}_z$$ 
and fixing 0, 1 and $\infty$, 
\item[$\circ$] $G^{\vep\nu}_{-k}(z,z';1)$ is the Green's function of $\Delta_{-k}$ on
$X^{\vep\nu}=\Gamma^{\vep\nu}\backslash\HH,\; k>0$, whereas for $k=0$ the Green's function is understood as the constant term in the Laurent expansion of
$G_{0}(z,z';s)$ at $s=1$, see \eqref{G_0}, 
\item[$\circ$] $\mu^{\vep\nu}\in\Omega^{-1,1}(X^{\vep\nu})$ is the parallel transport of $\mu\in\Omega^{-1,1}(X)$ along the trajectory of the tangent vector $\vep\nu$, and
\item[$\circ$] $R^{\vep\nu}_{-k}$ is a quadratic differential on $X^{\vep\nu}\setminus \{f^{\vep\nu}(z_0)\}$ given by the formula
\begin{align*}
R^{\vep\nu}_{-k}(z)=-\del y^{-2k}\del'\,\left(G^{\vep\nu}_{-k}(z,z';1)-Q_{-k}(z,z';1)\right){\Big |}_{z'=z}\,.
\end{align*}
\end{itemize}
For $\varphi^{\vep\nu}\in C^{p,q}(X^{\vep\nu}\setminus \{f^{\vep\nu}(z_0)\})$, we define its pullback $(f^{\vep\nu})^*\varphi^{\vep\nu}$ to $X\setminus \{z_0\}$ by the formula
$$
(f^{\vep\nu})^*\varphi^{\vep\nu}=\varphi^{\vep\nu}\circ f^{\vep\nu} (f^{\vep\nu}_z)^p (f^{\vep\nu}_{\bar z})^q\in C^{p,q}(X\setminus \{z_0\})\,,
$$
where $C^{p,q}(X\setminus \{z_0\})$ denotes the space of smooth $(p,q)$-differentials on the punctured at the elliptic point $z_0$ orbisurface $X\setminus \{z_0\}$.
Let $\Gamma_0$ denote the stabilizer of $z_0$ in $H$ generated by $T$, 
and put $(X^{\vep\nu})_\delta=(X^{\vep\nu})\setminus h_{\vep\nu}^{-1}(B_\delta)$, 
where $h_{\vep\nu}: \HH\to \D,\;h_{\vep\nu}(z)=\dfrac{z-f^{\vep\nu}(z_0)}{z-\overline{f^{\vep\nu}(z_0)}}$, and $B_\delta$ is a disk of small radius $\delta$ in the unit disk $\D$. Using Ahlfors' lemma
$$
\dfrac{\del}{\del\bar\vep}{\Big |}_{\vep=0}\dfrac{|f^{\vep\nu}_z|^2}{({\rm Im}\,f^{\vep\nu})^2}=0
$$
for $\nu\in\Omega^{-1,1}(X)$ (see \cite{A}), we continue \eqref{2nd} as follows:
\begin{align}
&\dfrac{\del^2}{\del\vep_\mu\del\bar\vep_\nu}{\Big |}_{\vep_\mu=\vep_\nu=0}\log\det\Delta_{-k}=\nonumber\\
&\qquad =\dfrac{\del}{\del\bar\vep}{\Big |}_{\vep=0} \left(\lim_{\delta\to 0}\int\limits_{X^{\vep\nu}_\delta}R^{\vep\nu}_{-k}(z)\mu^{\vep\nu}(z)\,d^2z\right)\nonumber\\
&\qquad =\dfrac{\del}{\del\bar\vep}{\Big |}_{\vep=0} \left(\lim_{\delta\to 0}\int\limits_{(f^{\vep\nu})^{-1}(X^{\vep\nu}_\delta)}(f^{\vep\nu})^*R^{\vep\nu}_{-k}(z)(f^{\vep\nu})^*\mu^{\vep\nu}(z)\,d^2z\right)\nonumber\\
&\qquad =\lim_{\delta\to 0}\int\limits_{X_\delta}\left(\dfrac{\del}{\del\bar\vep}{\Big |}_{\vep=0}(f^{\vep\nu})^*R^{\vep\nu}_{-k}(z)\right)\mu(z)\,d^2z\nonumber\\
&\qquad +\lim_{\delta\to 0}\int\limits_{X_\delta}R_{-k}(z)\left(\dfrac{\del}{\del\bar\vep}{\Big |}_{\vep=0}(f^{\vep\nu})^*\mu^{\vep\nu}(z)\right)\,d^2z\nonumber\\
&\qquad -\dfrac{\sqrt{-1}}{2}\lim_{\delta\to 0}\int\limits_{\del X_\delta}R_{-k}(z)\mu(z)\left(\dfrac{\del}{\del\bar\vep}{\Big |}_{\vep=0}f^{\vep\nu}d\bar{z}-\dfrac{\del}{\del\bar\vep}{\Big |}_{\vep=0}\overline{f^{\vep\nu}}dz\right)\nonumber\\
&\qquad =I_1+I_2+I_3\,,\label{3I}
\end{align}
where the integral $I_{3}$ is due to the variation of the domain of integration $(f^{\vep\nu})^{-1}(X^{\vep\nu}_\delta)$ in \eqref{2nd}.

The first integral in the right hand side of \eqref{3I} was computed in \cite{TZ}, Theorem 1, Formulas (4.7) and (4.8):
\begin{align}
\label{I1}
I_1={\rm Tr}((-\mu\bar\nu\,I+(\del_\mu\bar\del_{-k})\Delta_{-k}^{-1}(\del_{\bar\nu}\bar\del_{-k}^*))P_{-k,1})+\dfrac{3k+1}{12\pi}\langle\mu,\nu\rangle_{\rm WP}\,,
\end{align}
where $I$ is the identity operator in the Hilbert space $\mathcal{H}^{-k,1}(X)$, $P_{-k,1}:\mathcal{H}^{-k,1}(X)\rightarrow \Omega^{-k,1}(X)$ is the orthogonal projector, ${\rm Tr}$ is the trace, and
\begin{align*}
&\del_\mu\bar\del_{-k}=\dfrac{\del}{\del\vep}{\Big |}_{\vep=0}(f^{\vep\mu})^*\,\bar\del_{-k} ((f^{\vep\mu})^*)^{-1},\\
&\del_{\bar\nu}\bar\del_{-k}^*=\dfrac{\del}{\del\bar\vep}{\Big |}_{\vep=0}(f^{\vep\nu})^*\,\bar\del_{-k}^* ((f^{\vep\nu})^*)^{-1}.
\end{align*}

We proceed with the integral $I_2$ in the right hand side of \eqref{3I}. We will use Wolpert's formula \cite{W}
\begin{align*}
\dfrac{\del}{\del\bar\vep}{\Big |}_{\vep=0}(f^{\vep\nu})^*\mu^{\vep\nu}(z)=-\dfrac{\del}{\del\bar{z}}\, y^2 \dfrac{\del}{\del\bar{z}}f_{\mu\bar\nu},%\left(\Delta_0+\dfrac{1}{2}\right)^{-1}\,,
\end{align*}
where, as before, $f_{\mu\bar\nu}=\left(\Delta_0+\tfrac{1}{2}\right)^{-1}(\mu\bar{\nu})$. Then by Stokes' theorem
\begin{align}
I_2=&-\lim_{\delta\to 0}\int\limits_{X_\delta} R_{-k}(z)\,\dfrac{\del}{\del\bar{z}}\left( y^2 \dfrac{\del}{\del\bar{z}}f_{\mu\bar\nu}(z) \right) d^2z \nonumber\\
=&\int\limits_{X}\dfrac{\del }{\del\bar{z}}R_{-k}(z)\,\dfrac{\del }{\del\bar{z}}f_{\mu\bar\nu}(z) y^2 d^2z \nonumber\\
&+\dfrac{\sqrt{-1}}{2}\lim_{Y\to\infty}\int\limits_{\del X^Y} R_{-k}(z)\,\dfrac{\del }{\del\bar{z}}f_{\mu\bar\nu}(z) y^2 dz\nonumber\\
&+\dfrac{\sqrt{-1}}{2}\lim_{\delta\to 0}\int\limits_{\del X_\delta} R_{-k}(z)\,\dfrac{\del }{\del\bar{z}}f_{\mu\bar\nu}(z) y^2 dz\nonumber\\
=& I_4+I_5+I_6\,,\label{6I}
\end{align}
where $X^Y$ denotes the Riemann surface $\Gamma\backslash\HH$ with cusps cut off along horocycles at level $Y$ (see \cite{TZ} for details). The first two integrals in the right hand side of \eqref{6I} were computed in \cite{TZ}, Theorem 1. Namely, in case $l=0$ (no elliptic elements) it was shown that the integral $I_{4}$ is equal to
\begin{align*}%\label{I4}
I_4=& -k\,{\rm Tr}\left(y^2\left(\dfrac{\del^2}{\del\vep_\mu\del\bar\vep_\nu}\Big|_{\vep_\mu=\vep_\nu=0}(f^{\vep_\mu\mu+\vep_\nu\nu})^*\left(y^{-2}\right)\right)\,P_{-k,1}\right)
\nonumber \\
&+\dfrac{k(2k+1)}{4\pi}\langle\mu,\nu\rangle_{\rm WP}
\end{align*}
and
\begin{align}\label{I-5}
I_5=-\dfrac{\pi}{9}\langle\mu,\nu\rangle_{\rm cusp}\;.
\end{align}

In case of elliptic elements the computation of the integral $I_{5}$ in \cite{TZ} is unchanged and the formula \eqref{I-5} remains valid. However, the computation of the integral $I_{4}$ uses 
integration by parts, so one needs to take into the account the singularity of $\del R_{-k}/\del\bar{z}$ at $z=z_{0}$. This results in the extra contribution $I_{40}$ to the integral $I_{4}$, which we write using the coordinate $u$ in the unit disk $\D$. 

Namely, put  $C_\delta=\{u=\delta e^{\sqrt{-1}\theta}\,|\,0\leq\theta\leq\frac{2\pi}{m}\}$ (note that its orientation is opposite to the orientation of $\del X_{\delta}$) and denote for brevity
$$b(m,k)=-2km\left(B_{1}\left(\left\{\frac{k}{m}\right\}\right)+\frac{1}{2m}\right).$$
 Using asymptotic \eqref{0-1} in Lemma \ref{asymp}, this contribution can be written as
 \begin{align}
I_{40} & = \frac{\sqrt{-1}}{2}\lim_{\delta\to 0}\int_{C_{\delta}}\frac{\del}{\del\bar{u}}R_{-k}(u)f_{\mu\bar\nu}(u)\rho(u)^{-1}du \nonumber\\
& = \frac{\sqrt{-1}}{2\pi}\lim_{\delta\to 0}\int_{C_{\delta}}\frac{b(m,k)}{u}\dfrac{(1-|u|^2)^2}{4} f_{\mu\bar\nu}(u)du\nonumber\\
& =-\dfrac{b(m,k)}{8\pi} \lim_{r\to 0}\int_0^{\frac{2\pi}{m}} (1-r^2)^{2} f_{\mu\bar\nu}(r,\theta)d\theta\nonumber\\
&=-\dfrac{b(m,k)}{4m}c_{0}\nonumber\\
&=\dfrac{k}{2}\left(B_1\left(\left\{\dfrac{k}{m}\right\}\right)+\dfrac{1}{2m}\right)\int\limits_{X}G(0,u)\mu(u)\overline{\nu(u)}d\rho(u),\label{I4-0}
%-2\mu(0)\overline{\nu(0)}\right)\,,
 \end{align}
where we used the Fourier expansion for $f_{\mu\bar\nu}$ and Lemma \ref{f-0}. 

The integral $I_{6}$ is computed similarly.
%To compute the integral $I_6$ we will use the coordinate $u$ in the unit disk $\D$. Put $C_\delta=\{u=\delta e^{\sqrt{-1}\theta}\,|\,0\leq\theta\leq\frac{2\pi}{m}\}$
%and 
Denoting for brevity
\begin{align*}
c(m,k)=-\dfrac{m^{2}-1}{12} + \dfrac{\bar{k}(m-\bar{k})}{2}=-\dfrac{m^2}{2}\left(B_2\left(\left\{\dfrac{k}{m}\right\}\right)-\dfrac{1}{6m^2}\right)
\end{align*}
and using Lemma \ref{asymp}, we have
\begin{align}
I_6&=-\dfrac{\sqrt{-1}}{2\pi} \lim_{\delta\to 0}\int_{C_\delta}\dfrac{c(m,k)}{u^2}\,\dfrac{(1-|u|^2)^2}{4}\,\dfrac{\del f_{\mu\bar\nu}}{\del\bar{u}}\,du \nonumber\\
&=\dfrac{c(m,k)}{16\pi} \lim_{r\to 0}\int_0^{\frac{2\pi}{m}} \dfrac{(1-r^2)^2}{r}\left(\dfrac{\del}{\del r}+\dfrac{\sqrt{-1}}{r}\dfrac{\del}{\del\theta}\right) f_{\mu\bar\nu}(r,\theta)d\theta \nonumber\\
&=\dfrac{c(m,k)}{16\pi} \lim_{r\to 0}\int_0^{\frac{2\pi}{m}} \dfrac{1}{r}\,\dfrac{\del f_{\mu\bar\nu}}{\del r}d\theta \nonumber\\
&=\dfrac{c(m,k)}{8m} \lim_{r\to 0}\dfrac{1}{r}\,\dfrac{\del f_0}{\del r}\nonumber\\
&=\dfrac{c(m,k)}{4m}\,c_2\nonumber\\
&=-\dfrac{m}{4}\left(B_2\left(\left\{\dfrac{k}{m}\right\}\right)-\dfrac{1}{6m^2}\right)\left(\int\limits_{X}G(0,u)\mu(u)\overline{\nu(u)}d\rho(u)
-2\mu(0)\overline{\nu(0)}\right)\,,
\end{align}
where we used the Fourier expansion for $f_{\mu\bar\nu}$ and Lemma \ref{f-0}. (Note that the term $-2\mu(0)\overline{\nu(0)}$ is present only when $m=2$.)

The only integral that is left to compute is $I_3$ in \eqref{3I}. As in the case of the integral $I_6$, we evaluate $I_3$ using the coordinate $u$ in the unit disk $\mathbb{D}$. We have
\begin{align*}
I_3 & =\dfrac{\sqrt{-1}}{2}\lim_{\delta\to 0}\int\limits_{C_\delta}R_{-k}(u)\mu(u)\left(\dfrac{\del}{\del\bar\vep}{\Big |}_{\vep=0}f^{\vep\nu}d\bar{u}-\dfrac{\del}{\del\bar\vep}{\Big |}_{\vep=0}\overline{f^{\vep\nu}}du\right)\\
& =\dfrac{\sqrt{-1}}{2}\lim_{\delta\to 0}\int\limits_{C_\delta}R_{-k}(u)\mu(u)(\Phi(u)d\bar{u}-\overline{F(u)}du)\,,
\end{align*}
where we put 
$$\Phi=\dfrac{\del}{\del\bar\vep}{\Big |}_{\vep=0}f^{\vep\nu}\quad\text{and}\quad F= \dfrac{\del}{\del\vep}{\Big |}_{\vep=0}f^{\vep\nu}.$$ 
Since for $m>2$ we have $\mu(0)=0$, cf. \eqref{mu-disk}, this yields $I_{3}=0$. When $m=2$, we use Lemma \ref{asymp}, the fact that $\Phi$ is holomorphic \cite{A}, and the formulas
$$\mu_{u}(0)=0, \quad F(u)=F(0)+\nu(0)\bar{u}+F_{u}(0)u +O(u^{2})$$
and \eqref{0} to obtain
\begin{align*}
I_3 & =\dfrac{c(2,k)}{2\pi}\lim_{r\to 0}\int_{0}^{\pi}\!\frac{\mu(re^{\sqrt{-1}\theta})}{r}\!\left(e^{-3\sqrt{-1}\theta}\,\Phi(re^{\sqrt{-1}\theta})+e^{-\sqrt{-1}\theta}\,\overline{F(re^{\sqrt{-1}\theta})}\right)\!d\theta \\
& =\dfrac{c(2,k)}{2}\mu(0)\bar{\nu}(0)\,,
\end{align*}
where we put $u=re^{\sqrt{-1}\theta}$.
Thus, for all $m\geq 2$
$$I_{3}+I_{6}=-\dfrac{m}{4}\left(B_2\left(\left\{\dfrac{k}{m}\right\}\right)-\dfrac{1}{6m^2}\right)\int\limits_{X}G(0,u)\mu(u)\overline{\nu(u)}d\rho(u). $$

\allowdisplaybreaks[0]
To complete the proof we recall Lemma 1 in \cite{ZT} (or Lemma 5 in \cite{TZ}) that computes the curvature (or the first Chern form) of the determinant line bundle $\lambda_{-k}$ relative to the standard $L^2$-metric $||\cdot||_{-k}$:
\begin{align}
&c_1(\lambda_{-k}, ||\cdot||_{-k})(\mu,\nu)\nonumber\\
&\qquad=\dfrac{\sqrt{-1}}{2\pi}\,{\rm Tr}\Big(\Big(\Big(\mu\bar\nu + ky^2\dfrac{\del^2}{\del\vep_\mu\del\bar\vep_\nu}\Big|_{\vep_\mu=\vep_\nu=0}(f^{\vep_\mu\mu+\vep_\nu\nu})^*\left(y^{-2}\right)\Big)I\nonumber\\
&\qquad\quad - (\del_\mu\bar\del_{-k})\Delta_{-k}^{-1}(\del_{\bar\nu}\bar\del_{-k}^*)\Big)P_{-k,1}\Big)\,.\label{c1}
\end{align}
Here we use the same notation as in formula \eqref{I1}, and $\mu,\nu\in\Omega^{-1,1}(X)$ are understood as tangent vectors to $T(\Gamma)$ at the origin. 
Then for the first Chern form of $\lambda_{-k}$ relative to the Quillen metric we have
\begin{align}
c_1(\lambda_{-k}, ||\cdot||_{-k}^Q)(\mu,\nu)&=c_1(\lambda_{-k}, ||\cdot||_{-k})(\mu,\nu)\nonumber\\
&+\dfrac{\sqrt{-1}}{2\pi}\dfrac{\del^2}{\del\vep_\mu\del\bar\vep_\nu}{\Big |}_{\vep_\mu=\vep_\nu=0}\log\det\Delta_{-k}\,.\label{c1_Q}
\end{align}
Substituting formulas \eqref{3I} -- \eqref{c1} into \eqref{c1_Q}, we arrive at the assertion of the theorem.
\end{proof}

\section{Concluding remarks} \label{4}

\subsection{Local potential for elliptic metric} \label{potential}
Let $\Gamma$ be a cofinite Fuchsian group, and let $T$ be an elliptic generator of $\Gamma$ of order $m$ with the fixpoint $0\in\D$. Following \cite{PTT}, 
we are going to show that the positive definite Hermitian product \eqref{ell}
\begin{align*} %\label{ell product-2}
\left\langle\dfrac{\del}{\del\vep_\mu},\dfrac{\del}{\del\vep_\nu}\right\rangle^{\rm ell}=\int_{X}G(0,u)\mu(u)\overline{\nu (u)}d\rho(u)\;,
\end{align*}
where $X=\Gamma\backslash\D$, has a local potential in a neighborhood of the origin in the Teichm\"{u}ller space $T(\Gamma)$.

For the sake of simplicity let us assume that the group $\Gamma$ has genus $0$. Let 
$$
J:\D\longrightarrow\C\smallsetminus\{w_{1},\dots,w_{n+l-3},0,1\}
\subset\Gamma\backslash\D
$$ 
be the corresponding {\em Hauptmodul} with ramification index $m$ over $J(0)=0$, 
where $n$ and $l$ are the numbers of parabolic and elliptic generators of $\Gamma$ respectively. 
The function $J$ has a power series expansion in $u\in\D$ of the form
$$
J(u)=\sum_{k=1}^{\infty}J_k\,u^{mk},
$$
where $J_1\neq 0$. For the density of the hyperbolic metric $e^{\varphi(w)}|dw|^2$ on $X=C\setminus\{w_{1},\dots,w_{n+l-3},0,1\}$ we have
$$
e^{\varphi(w)}=\dfrac{4|J^{-1}(w)'|^{2}}{(1-|J^{-1}(w)|^{2})^{2}}
$$
and
$$
\dfrac{4|J_1|^{-\frac{2}{m}}}{m^{2}}=\lim_{w\rightarrow 0}e^{\varphi(w)}|w|^{2-\frac{2}{m}}\,.
$$
Take $\mu\in\Omega^{-1,1}(X)$ and denote by $F^{\vep\mu}:\C\rightarrow\C$ the quasiconformal map satisfying the Beltrami equation 
$F^{\vep\mu}_{\bar{w}}=\vep\mu F^{\vep\mu}_w$ that fix 0, 1 and $\infty$. Let $\Gamma^{\vep\mu}=F^{\vep\mu}\circ\Gamma\circ (F^{\vep\mu})^{-1}$ be the deformation 
of the group $\Gamma$ in $T(\Gamma)$ in the direction of $\mu$. Then we can think of $F^{\vep\mu}$ as a map
$C\setminus\{w_{1},\dots,w_{n+l-3},0,1\}\rightarrow C\setminus\{w_{1}^{\vep\mu},\dots,w_{n+l-3}^{\vep\mu},0,1\}\in X^{\vep\mu}$,
where $X^{\vep\mu}=\Gamma^{\vep\mu}\backslash\D$ and $F^{\vep\mu}(w_i)=w_i^{\vep\mu}$.
Let us now put
$$
h^{\vep\mu}=-\log|J_1^{\vep\mu}|^{\frac{2}{m}}=2\log m-2\log 2+\lim_{w\rightarrow 0}\left(\varphi^{\vep\mu}(w)+\left(1-m^{-1}\right)\log|w|^{2}\right)\,.
$$
Then using Wolpert's formula \cite{W} for the second variation of the hyperbolic area form and the fact that $F^{\vep\mu}(w)$ is holomorphic in $\vep$,
we get 
\begin{align*}
\left.\dfrac{\del^{2}}{\del\vep\del\bar\vep}\right|_{\vep=0}h^{\vep\mu} &=\dfrac{1}{2}(\Delta_{0}+\tfrac{1}{2})^{-1}(|\mu|^{2})(0) \\
& = \dfrac{1}{2}\iint\limits_{X}G(0,u)|\mu(u)|^{2}d\rho(u)\\
& =\dfrac{1}{2}\left<\dfrac{\del}{\del\vep_\mu},\dfrac{\del}{\del\vep_\mu}\right>^{\rm ell}\,.
\end{align*}
In other words, $h^{\vep\mu}$ is a potential of the elliptic metric $\langle\;,\;\rangle^{\rm ell}$ that is defined globally on $\mathcal{M}_{0,n+l}$ for any
elliptic generator $T_1,\ldots,T_l$ of $\Gamma$.

If the group $\Gamma$ has genus $g>0$, one can use the Schottky uniformization to construct local potentials for the elliptic metrics in exactly the same way
(see \cite{PTT} for details). Thus, we have the following

\begin{theorem}
Let $\Gamma$ be a finitely generated cofinite Fuchsian group of signature $(g;n;m_1,\ldots,m_l)$. Then each Hermitian metric $\langle\;,\;\rangle^{\rm ell}_1,\ldots,\langle\;,\;\rangle^{\rm ell}_l$ 
defined by \eqref{ell} is K\"{a}hler on the Teichm\"{u}ller space $T(\Gamma)$ (or on the moduli space $T(\Gamma)/{\rm Mod}(\Gamma)$ in the orbifold sense).
\end{theorem}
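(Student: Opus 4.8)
The plan is to exhibit an explicit local K\"{a}hler potential for each metric $\langle\;,\;\rangle^{\rm ell}_j$ on a neighborhood of any point of $T(\Gamma)$, following \cite{PTT}; K\"{a}hlerness then follows because $\langle\;,\;\rangle^{\rm ell}_j$ is manifestly a Hermitian metric and having a local potential $h$ with $\partial\bar\partial h$ equal (up to the right constant) to its associated $(1,1)$-form is exactly the statement of K\"{a}hlerness. First I would reduce to the genus-zero case as done above: fix an elliptic generator $T_j$ of order $m=m_j$, put its fixpoint at $0\in\D$, and realize $X=\Gamma\backslash\D$ via the \emph{Hauptmodul} $J$ with ramification index $m$ over $0$. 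The key object is the leading Taylor coefficient $J_1^{\vep\mu}$ of the Hauptmodul for the deformed group $\Gamma^{\vep\mu}$, out of which one builds
\begin{align*}
h^{\vep\mu}=-\log|J_1^{\vep\mu}|^{\frac{2}{m}}=2\log m-2\log 2+\lim_{w\to 0}\bigl(\varphi^{\vep\mu}(w)+(1-m^{-1})\log|w|^2\bigr).
\end{align*}

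The heart of the argument is the computation $\left.\dfrac{\del^2}{\del\vep\del\bar\vep}\right|_{\vep=0}h^{\vep\mu}=\tfrac12(\Delta_0+\tfrac12)^{-1}(|\mu|^2)(0)=\tfrac12\langle\partial/\partial\vep_\mu,\partial/\partial\vep_\mu\rangle^{\rm ell}_j$, which uses two ingredients: (i) $F^{\vep\mu}(w)$ depends holomorphically on $\vep$, so the mixed derivative only sees the variation of the conformal factor $\varphi^{\vep\mu}$ and not of the map itself; and (ii) Wolpert's formula \cite{W} for the second variation of the hyperbolic metric density, $\left.\dfrac{\del^2}{\del\vep\del\bar\vep}\right|_{\vep=0}\varphi^{\vep\mu}=(\Delta_0+\tfrac12)^{-1}(|\mu|^2)$ up to the relevant normalization, which one then evaluates at $w=0$ (equivalently $u=0$) and rewrites via the Green's kernel $G(0,u)$ against the hyperbolic area form $d\rho$. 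Polarizing this real-analytic identity in $\mu$ (replacing $\vep\mu$ by $\vep_\mu\mu+\vep_\nu\nu$ and extracting the $\partial_{\vep_\mu}\partial_{\bar\vep_\nu}$ component) gives $\partial\bar\partial h = \tfrac12\langle\;,\;\rangle^{\rm ell}_j$ as $(1,1)$-forms, hence $\langle\;,\;\rangle^{\rm ell}_j$ is K\"{a}hler; in the genus-zero case $h$ is in fact a global function on $\mathcal{M}_{0,n+l}$.

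For $g>0$ I would replace the Hauptmodul by a Schottky-uniformization coordinate: the quotient map $\D\to X$ is no longer globally single-valued on a sphere, but locally near the elliptic point $0$ one still has a well-defined branched local coordinate with ramification index $m$, and its leading coefficient $J_1^{\vep\mu}$ (now only locally defined, up to the Schottky deformation parameters) yields a local potential $h^{\vep\mu}$ for which the identical second-variation computation goes through verbatim, since Wolpert's formula and Ahlfors-type holomorphy of the deforming maps are local statements; see \cite{PTT}. The main obstacle — such as it is — is bookkeeping: carefully justifying the interchange of $\lim_{w\to0}$ with $\partial^2/\partial\vep\partial\bar\vep$ and pinning down the precise constant coming from the normalization $e^{\varphi(w)}|w|^{2-2/m}\to 4|J_1|^{-2/m}/m^2$, so that the potential reproduces $\langle\;,\;\rangle^{\rm ell}_j$ exactly rather than up to an unspecified positive factor. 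Once the constant is nailed down the statement follows immediately, and we refer to \cite{PTT} for the details in the $g>0$ case.
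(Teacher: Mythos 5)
Your proposal is correct and follows essentially the same route as the paper: reduction to the genus-zero case via the Hauptmodul, the potential $h^{\vep\mu}=-\log|J_1^{\vep\mu}|^{2/m}$, the second-variation computation using Wolpert's formula together with the holomorphy of $F^{\vep\mu}$ in $\vep$, and the Schottky-uniformization argument (with reference to \cite{PTT}) for $g>0$. No substantive differences to report.
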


As in the case of punctured Riemann surfaces in \cite{W2,PTT}, for each conical point $z_j$ we consider the tautological line bundle $\mathcal{L}_j$ on 
$T(\Gamma)$, or rather a $\mathbb{Q}$-line bundle on $T(\Gamma)/{\rm Mod}(\Gamma)$. Its fibers are holomorphic cotangent lines at conical points. 
Then as in \cite{PTT} (cf. also \cite{FP}) one can show that $h$ determines a Hermitian metric in the line bundle $\mathcal{L}$ and
$$
c_{1}(\mathcal{L}_j, h)=-\frac{1}{2\pi}\omega_j^{\rm ell}.
$$

\subsection{Cuspidal and elliptic metrics}\label{relation}
Here we will show that when the order of the elliptic generator tends to $\infty$, the corresponding Hermitian product converges to the cuspidal one.
Consider the family of elliptic transformations $T_m$ of order $m$ of the form
$$
T_m=C_m O_{m}C_m^{-1},
$$
where 
\begin{align*}
O_{m}=\begin{pmatrix} \;\;\,\cos\dfrac{2\pi}{m} &\sin\dfrac{2\pi}{m}\\ \;&\;\\ -\sin\dfrac{2\pi}{m} &  \cos\dfrac{2\pi}{m} \end{pmatrix}\,,
\qquad C_{m}=\begin{pmatrix} 0 & \sqrt{\dfrac{m\vphantom{2}}{2\pi}}\\-\sqrt{\dfrac{2\pi}{m}} & 0\end{pmatrix}\,,
\end{align*}
and $m=2,3,\ldots$ Then
$$
T_{m}=\begin{pmatrix} \cos\dfrac{2\pi}{m} & \dfrac{m}{2\pi}\sin\dfrac{2\pi}{m} \\\;&\;\\
-\dfrac{2\pi}{m}\sin\dfrac{2\pi}{m} & \cos\dfrac{2\pi}{m}\end{pmatrix} \longrightarrow \begin{pmatrix} 1 & 1\\0 & 1\end{pmatrix} 
$$
as $m\rightarrow\infty$, and $\zeta_m=\dfrac{\sqrt{-1}m}{2\pi}$ (the fixpoint of $T_m$ in $\HH$) tends to $\sqrt{-1}\,\infty$.

To compute the limit of the elliptic scalar product as $m\rightarrow\infty$ we use Fay's formula \cite[Theorem 3.1]{Fay} 
$$
G_0(z,z';s)=\dfrac{4\,y^{1-s}}{2s-1}E(z',s)+O(e^{-2\pi y})
$$
as $y\rightarrow\infty$ and $y>y'$. Here $G_0(z,z';s)$ stands, as before, for the integral kernel of $\left(\Delta_{0}+\dfrac{s(s-1)}{4}\right)^{-1}$ on $X=\Gamma\backslash\HH$, and $E(z,s)$ is the Eisenstein series associated with the parabolic subgroup generated by $\left(\begin{smallmatrix} 1&1\\0&1 \end{smallmatrix}\right)$. 
Putting $s=2$ we get for $m$ large that
$$
G(\zeta_{m},z)=G_0(\zeta_m,z;2)=\dfrac{8\pi}{3m}E(z,2)+O(e^{-m}).
$$
Thus we see that 
$$
\dfrac{3m}{8\pi}\left<\dfrac{\del}{\del\vep_\mu},\dfrac{\del}{\del\vep_\nu}\right>^{\rm ell}_m\longrightarrow
\left<\dfrac{\del}{\del\vep_\mu},\dfrac{\del}{\del\vep_\nu}\right>^{\rm cusp}_\infty\quad {\rm as}\quad m\rightarrow\infty\,,
$$
where $\langle\;,\;\rangle^{\rm ell}_m$ is the Hermitian product associated with the elliptic generator $T_m$, and 
$\langle\;,\;\rangle^{\rm cusp}_\infty$ is the Hermitian product associated with the parabolic generator $\left(\begin{smallmatrix} 1&1\\0&1 \end{smallmatrix}\right)$\,.

\subsection{Elliptic metric and Selberg zeta values}\label{zeta values}
Here we give a simple example of a relation between the elliptic metric and Selberg zeta values considered as functions on the Teichm\"uller space $T(\Gamma)$.
As $\Gamma$ we take a Fuchsian group of the first kind of signature $(0;1;2,2,2)$, i.~e. 
$$
\Gamma=\{S_0,T_1,T_2,T_3\,|\,S_0T_1T_2T_3=T_1^2=T_2^2=T_3^2=I\}.
$$
Let $\chi:\Gamma\to\Z/2\Z$ be the character defined on the generators by $\chi(S_0)=\chi(T_1)=\chi(T_2)=\chi(T_3)=-1$, and let $\Gamma'=\ker\chi$.
Then $\Gamma'$ is a torsion-free subgroup of $\Gamma$ of index 2 and signature (1;1) given by
$$
\Gamma'=\{A_1,B_1,S_1\,|\,A_1B_1A_1^{-1}B_1^{-1}S_1=I\},
$$
where $A_1=T_1T_2,\;A_2=T_3T_2$ and $S_1=S_0^2$. The group $\Gamma'$ uniformizes a once punctured elliptic curve given by a lattice $\Lambda=\Z\cdot 1+\Z\cdot\tau\subset\C$ with ${\rm Im}\tau>0$, so that $\Gamma'\backslash\HH\simeq\Lambda\backslash\C-\{0\}$. The Teichm\"uller spaces of $\Gamma$ and $\Gamma'$ are naturally isomorphic: $T(\Gamma)=T(\Gamma')=\{\tau\in\C\,|\,{\rm Im}\tau>0\}$. Formula \eqref{main} applied to the determinant line bundles $\lambda_k$ and 
$\lambda'_k$ on $T(\Gamma)$ and $T(\Gamma')$ respectively yields
\begin{align}
&c_1(\lambda_k,||\cdot||_k^Q)=\dfrac{6k^2-6k+1}{12\pi^2}\omega^{}_{\mathrm{WP}}-\dfrac{1}{9}\omega^{}_{\rm cusp}+\dfrac{(-1)^k}{16\pi}\omega^{}_{\rm ell}\,,\label{f1}\\
&c_1(\lambda'_k,||\cdot||_k^Q)=\dfrac{6k^2-6k+1}{12\pi^2}\omega'_{\mathrm{WP}}-\dfrac{1}{9}\omega'_{\rm cusp}\label{f2}
\end{align}
(here we assume that $k\geq 1$, and $\omega^{}_{\rm ell}=\omega_1^{\rm ell}+\omega_2^{\rm ell}+\omega_3^{\rm ell}$). 

Since the fundamental domain of $\Gamma'$ is twice the fundamental domain of $\Gamma$, and the Beltrami differential corresponding to $\del/\del\tau$ is the same for both $\Gamma$ and $\Gamma'$, we have $\omega'_{\mathrm{WP}}=2\omega^{}_{\mathrm{WP}}$. Moreover, since $\langle S_1\rangle\backslash\Gamma'=\langle S_0\rangle\backslash\Gamma$, the Eisenstein series for $\Gamma$ and $\Gamma'$ are equal, i.~e. $E(z,s;\Gamma)=E(z,s;\Gamma')$, see \eqref{eis}. Therefore, we have
$\omega'_{\mathrm{cusp}}=2\omega^{}_{\mathrm{cusp}}$, and comparing \eqref{f1} and \eqref{f2} we see that
\begin{align}
2c_1(\lambda_k,||\cdot||_k^Q)-c_1(\lambda'_k,||\cdot||_k^Q)=\dfrac{(-1)^k}{8\pi}\omega^{}_{\rm ell}\,.\label{diff}
\end{align}
For $k=1$ we have
\begin{align*}
&c_1(\lambda_1,||\cdot||_1^Q)=\dfrac{\sqrt{-1}}{2\pi}\bar\del_\tau\del_\tau\log\left(\dfrac{1}{Z'(1,\Gamma,1)}\right)\,,\\
&c_1(\lambda'_1,||\cdot||_1^Q)=\dfrac{\sqrt{-1}}{2\pi}\bar\del_\tau\del_\tau\log\left(\dfrac{{\rm Im}\tau}{Z'(1,\Gamma',1)}\right)\,,
\end{align*}
where $\del_\tau$ and $\bar\del_\tau$ are the (1,0)- and (0,1)-components of the exterior derivative operator on the upper half-plane $\{\tau\in\C\,|\,{\rm Im}\tau>0\}$, and $Z(s,\Gamma,\chi)$ is defined by \eqref{zeta}. By \cite[Theorem 3.1]{VZ} we have 
$$
Z(s,\Gamma',1)=Z(s,\Gamma,1)\cdot Z(s,\Gamma,\chi)
$$ 
and hence $Z'(1,\Gamma',1)=Z'(1,\Gamma,1)\cdot Z(1,\Gamma,\chi)$ (note that $Z(1,\Gamma,\chi)\neq 0$). Substituting this expression for $Z'(1,\Gamma',1)$ into \eqref{diff}, we finally obtain that for a group $\Gamma$ of signature $(0;1;2,2,2)$
\begin{align}
\sqrt{-1}\,\omega^{}_{\rm ell}=-\dfrac{d\tau\wedge d\bar\tau}{({\rm Im}\tau)^2}+4\,\bar\del_\tau\del_\tau\log\left(\dfrac{Z(1,\Gamma,\chi)}{Z'(1,\Gamma,1)}\right)
\end{align}
on $T(\Gamma)=\{\tau\in\C\,|\,{\rm Im}\tau>0\}$. Here $\Gamma_\tau=f^{\mu}\circ\Gamma\circ(f^{\mu})^{-1}$, where $f^{\mu}:\mathbb{H}\to \mathbb{H}$ is the Fuchsian deformation satisfying the Beltrami equation $f^{\mu}_{\bar z}=\mu f^{\mu}_z$ with $\mu\in\Omega^{-1,1}(\Gamma\backslash\HH)$ corresponding to the tangent vector $\del/\del\tau$.


\begin{thebibliography}{00}

\bibitem{A} Lars V. Ahlfors, \emph{Some remarks on Teichm\"{u}ller's space of Riemann surfaces}, Ann. of Math. (2) {\bf 74} (1961), 171--191.
\bibitem{Bers} Lipman Bers, \emph{Deformations and moduli of Riemann surfaces with nodes and signatures}, Math. Scand. {\bf 36} (1975), 12-16.
\bibitem{C} Tankut Can, \emph{Central charge from adiabatic transport of cusp singularities in the quantum Hall effect}, arXiv: 1611.05563.
\bibitem{DPh} E. D'Hoker and D. H. Phong, \emph{On Determinants of Laplacians on Riemann Surfaces}, Commun. Math. Phys. {\bf 104} (1986), 537-545.
\bibitem{Fay} John D. Fay, \emph{Fourier coefficients of the resolvent for a Fuchsian group}, J. Reine Angew. Math. {\bf 293/294} (1977), 143--203.
\bibitem{FP} Gerard Freixas i Montplet and Anna-Maria von Pippich, \emph{Riemann-Roch isometries in the non-compact orbifold setting}, arXiv: 1604.00284.
\bibitem{PTT} Jinsung Park, Leon A. Takhtajan and Lee-Peng Teo, \emph{Potentials and Chern forms for Weil--Petersson and Takhtajan--Zograf metrics on moduli spaces}, Adv. Math. {\bf 305} (2017), 856-894.
\bibitem{Q} D.~Quillen, \emph{Determinants of Cauchy-Riemann operators over a Riemann surface},  Funk. Anal. i
Prilozen. {\bf19} (1) (1985), 37-41 (Russian); English transl. in: Funct. Anal. Appl. {\bf 19} (1985), 31-34.
\bibitem{Sh} G. Shimura, \emph{Introduction to the Arithmetic Theory of Automorphic Functions}, Princeton University Press, 1971.
\bibitem{TZ} L.~A.~Takhtajan and P.~G.~Zograf, \emph{A local index theorem for families of $\bar\del$-operators on punctured Riemann surfaces and a new K\"{a}hler metric on their moduli space}, Commun. Math. Phys. {\bf 137} (1991), 399--429.
\bibitem{LPT} Lee-Peng Teo,
\emph{Local Index Theorem for Cofinite Hyperbolic Riemann Surfaces,}
arXiv: 2401.12260.
\bibitem{VZ} A. Venkov and P. Zograf, \emph{On analogues of the Artin factorization formulas in the spectral theory of automorphic functions connected with induced representations of Fuchsian groups}, Izv. Akad. Nauk SSSR Ser. Mat., {\bf 46}:6 (1982), 1150--1158; English transl. in: Mathematics of the USSR-Izvestiya, 
{\bf 21}:3 (1983), 435--443.
\bibitem{W} Scott A. Wolpert, \emph{Chern forms and the Riemann tensor for the moduli space of curves}, Invent. Math. {\bf 85} (1986), no. 1, 119--145.
\bibitem{W2} Scott A. Wolpert, \emph{Cusps and the family hyperbolic metric}, Duke Math. J. {\bf 138}:3 (2007), 423-443.
\bibitem{ZT}  P.~G.~Zograf and L.~A.~Takhtadzhyan, \emph{A local index theorem for families of $\bar\del$-operators on Riemann surfaces}, Uspekhi Mat. Nauk {\bf 42} (1987), no. 6(258), 133--150 (Russian); English. transl. in: Russian Math. Surveys {\bf 42}:6 (1987), 169--190.
\end{thebibliography}
\end{document}